\def\ra {\rightarrow}
\def\be{\begin{equation}}   \def\ee{\end{equation}}
\def\ba   {\begin{array}}      \def\ea   {\end{array}}
\def\bea  {\begin{eqnarray}}   \def\eea  {\end{eqnarray}}
\def\bean {\begin{eqnarray*}}  \def\eean {\end{eqnarray*}}
\newtheorem{theorem} {Theorem}
\newtheorem{lemma}{Lemma}
\newtheorem{definition} {Definition}
\newtheorem{corollary} {Corollary}
\newtheorem{remark}{Remark}
\newtheorem{conjecture} {Conjecture}
\newcommand{\pre}{\mathrm{Re}}
\newcommand{\pim}{\mathrm{Im}}
\newcommand{\bi} {\ensuremath{{\bf i}}}
\newcommand{\bo} {\ensuremath{{\bf i_1}}}
\newcommand{\bos}{\ensuremath{{\bf i_1^{\text 2}}}}
\newcommand{\bts}{\ensuremath{{\bf i_2^{\text 2}}}}
\newcommand{\bj}{\ensuremath{{\bf j}}}
\newcommand {\bjp}{\ensuremath{{\bf j_1}}}
\newcommand {\bjps}{\ensuremath{{\bf j_1^{\text 2}}}}
\newcommand {\bjd}{\ensuremath{{\bf j_2}}}
\newcommand {\bjds}{\ensuremath{{\bf j_2^{\text 2}}}}
\newcommand{\bt} {\ensuremath{{\bf i_2}}}
\newcommand{\bb} {\ensuremath{{\bf i_3}}}
\newcommand{\bbs} {\ensuremath{{\bf i_3^{\text 2}}}}
\newcommand{\bq} {\ensuremath{{\bf i_4}}}
\newcommand{\bqs} {\ensuremath{{\bf i_4^{\text 2}}}}
\newcommand{\bn} {\ensuremath{{\bf i_n}}}
\newcommand{\bk} {\ensuremath{{\bf i_{k}}}}
\newcommand {\bjt}{\ensuremath{{\bf j_3}}}
\newcommand {\bjts}{\ensuremath{{\bf j_3^{\text 2}}}}
\newcommand {\bjk}{\ensuremath{{\bf j_k}}} 
\newcommand{\bil} {\ensuremath{{\bf i_l}}}
\newcommand{\bim} {\ensuremath{{\bf i_m}}}
\newcommand{\eb} {\ensuremath{\gamma_1}}
\newcommand{\ett} {\ensuremath{\gamma_2}}
\newcommand{\etc} {\ensuremath{\Ol{\gamma}_2}}
\newcommand{\Ol}{\overline}
\newcommand{\mC}{\ensuremath{\mathbb{C}}}
\newcommand{\mD}{\ensuremath{\mathbb{D}}}
\newcommand{\mN}{\ensuremath{\mathbb{N}}}
\newcommand{\mZ}{\ensuremath{\mathbb{Z}}}
\newcommand{\mR}{\ensuremath{\mathbb{R}}}
\newcommand{\mT}{\ensuremath{\mathbb{T}}}
\newcommand{\mH}{\ensuremath{\mathbb{D}}}
\newcommand{\mM}{\ensuremath{\mathbb{M}}}
\newcommand{\mMan}{\ensuremath{\mathcal{M}}}
\newcommand{\mTet}{\ensuremath{\mathcal{T}}}
\newcommand{\op}{\left(}
\newcommand{\fp}{\right)}
\newcommand{\oa}{\left\{}
\newcommand{\fa}{\right\}}
\newcommand{\lc}{\left[}
\newcommand{\rc}{\right]}
\newcommand{\Qpit}{\ensuremath{\left\lbrace Q_{p,c}^m(0) \right\rbrace_{m=1}^{\infty}}}
\newcommand{\mManp}{\ensuremath{\mathcal{M}^p}}
\newcommand{\mHt}{\ensuremath{\mathcal{H}^2}}
\newcommand{\mHyb}{\ensuremath{\mathcal{H}}}
\newcommand{\cP}{\ensuremath{\mathcal{P}}}
\newcommand{\bHpc}{\ensuremath{\mathbf{H}_{p,c}}}
\newcommand{\VEC}[2]{\begin{pmatrix}
#1 \\ #2
\end{pmatrix}}
\newcommand{\biq}{\ensuremath{\bf i_q}}
\newcommand{\bis}{\ensuremath{\bf i_s}}
\begin{document}

\markboth{P.-O. Paris\'e and D. Rochon}{Tricomplex dynamical systems generated by polynomials of odd degree}

\title{Tricomplex dynamical systems generated by polynomials of odd degree}

\author{Pierre-Olivier Paris\'e \thanks{E-mail: \texttt{Pierre-Olivier.Parise@uqtr.ca}} \and
Dominic Rochon\thanks{E-mail: \texttt{Dominic.Rochon@UQTR.CA}}}
\date{Département de mathématiques et
d'informatique \\ Université du Québec à Trois-Rivières \\
C.P. 500 Trois-Rivières, Québec \\ Canada, G9A 5H7}

\maketitle

\begin{abstract}
In this article, we give the exact interval of the cross section of the \textit{Multibrot} sets generated by the polynomial $z^p+c$ where $z$ and $c$ are complex numbers and $p > 2$ is an odd integer. Furthermore, we show that the same Multibrots defined on the hyperbolic numbers are always squares. Moreover, we give a generalized 3D version of the hyperbolic Multibrot set and prove that our generalization is an octahedron for a specific 3D slice of the dynamical system generated by the tricomplex polynomial $\eta^p+c$ where $p > 2$ is an odd integer.
\end{abstract}\vspace{0.5cm} 
\noindent\textbf{AMS subject classification:} 37F50, 32A30, 30G35, 00A69 \\
\textbf{Keywords:} Tricomplex dynamics, Multibrot, Hyperbrot, Generalized Mandelbrot sets, Multicomplex numbers, 3D fractals

\section*{Introduction}

Multicomplex dynamics appears for the first time in 2000 (see \cite{Rochon1} and \cite{Rochon2}). The author of these articles used a commutative generalization of complex numbers called the bicomplex numbers, denoted $\mM (2)$, to extend the well known Mandelbrot set in four dimensions and to give a 3D version of it.

The Multibrot sets were first studied in \cite{Gujar} and \cite{Papathomas}. Following these works, X.-Y. Wang and W.-J. Song \cite{Chine1} extend to the bicomplex space the Multibrot sets defined as
	\begin{equation}
	\mMan^p:= \oa c \in \mC \, | \, \oa Q_{p,c}^m(0) \fa_{m=1}^{\infty} \text{ is bounded} \fa \label{MultibrotDef}
	\end{equation}
where $Q_{p,c}(z) = z^p + c$. Recently, in  \cite{RochonParise}, the authors introduce the tricomplex Multibrot sets. Based on an other work on the topic (see \cite{GarantRochon}), they proved some topological properties of the complex, hyperbolic and tricomplex Multibrot sets and studied the Multibrot set generated by the polynomial $\eta^3+c$ where $\eta$ and $c$ are tricomplex numbers. Particularly, they proved that the real intersection of the Multibrot set $\mMan^3$ is the interval $\lc -\frac{2}{2\sqrt{3}}, \frac{2}{2\sqrt{3}} \rc$. Their proof was based on the exact formula of the roots of the polynomial $x^3-x+c$. However, we know since Galois' work that no general formula expresses, in terms  of the elementary operations, all the roots of a polynomial of degree greater or equal to 5. 

In this article, we use another approach to find the real intersection of the Multibrot sets as it is conjectured in \cite{RochonParise}:
	\begin{equation}
	\mMan^p \cap \mR = \lc -\frac{p-1}{p^{p/(p-1)}}, \frac{p-1}{p^{p/(p-1)}} \rc \label{princConjecture}
	\end{equation}
for any odd integer $p > 2$. We prove that this conjecture is true and has many consequences in tricomplex dynamics.

The article is separated into three sections. In the first section, we recall some basics of the theory of tricomplex numbers. In the second section, we introduce the tricomplex Multibrot sets where complex and hyperbolic dynamics are embedded. In the third section, we analyse the locus of the real roots of the polynomial $x^p-x+c$ for a given odd integer $p > 2$. Then, we apply those results to Multibrot sets to prove \eqref{princConjecture}. Moreover, we prove that the Hyperbrot set defined as
	\begin{equation}
	\mHyb^p:=\oa c \in \mD \, | \, \oa Q_{p,c}^m(0)\fa_{m=1}^{\infty } \text{ is bounded} \fa \label{HyperbrotDef}
	\end{equation}
where $\mD$ is the set of hyperbolic numbers is a square when the degree $p > 2$ is an odd integer. Finally, we define the generalized three-dimensional (3D) version of the Hyperbrot set of order $p$ as a 3D slice of a tricomplex Multibrot of order $p$ and prove that it is a regular octahedron for all odd integers $p>2$.

\section{Preliminaries}
\label{Pre}
In this section, we begin by introducing the tricomplex space $\mM (3)$. The reader may refer to \cite{Baley}, \cite{GarantPelletier}, \cite{Parise} and \cite{Vajiac} for more details on the next properties.

A tricomplex number $\eta$ is composed of two coupled bicomplex numbers $\zeta_1$, $\zeta_2$ and an imaginary unit $\bb$ such that
	\begin{equation}
	\eta=\zeta_1 + \zeta_2 \bb \label{eq2.1}
	\end{equation}
where $\bbs=-1$. The set of such tricomplex numbers is denoted by $\mM (3)$. Since $\zeta_1,\zeta_2 \in \mM (2)$, we can write them as $\zeta_1=z_1+ z_2\bt$ and $\zeta_2=z_3+ z_4\bt$ where $z_1,z_2,z_3,z_4 \in \mM (1)\simeq \mC$. In that way, \eqref{eq2.1} can be rewritten as
	\begin{equation}
	\eta=z_1+ z_2 \bt+  z_3 \bb+  z_4 \bjt\label{eq2.2}
	\end{equation}
where $\bts=-1$, $\bt \bb = \bb \bt = \bjt$ and $\bjts=1$. Moreover, as $z_1$, $z_2$, $z_3$ and $z_4$ are complex numbers (in $\bo$), we can write the number $\eta$ in a third form as
	\begin{align}
	\eta&=a+ b\bo + (c+  d\bo)\bt + (e +  f\bo)\bb + (g +  h\bo)\bjt\notag\\
	&=a+ b\bo +  c\bt +  d\bjp +  e\bb +  f\bjd + g \bjt +  h\bq\label{eq2.3}
	\end{align}
where $\bos=\bqs=-1$, $\bq =\bo \bjt = \bo \bt \bb$, $\bjd = \bo \bb = \bb \bo$, $\bjds=1$, $\bjp=\bo \bt = \bt \bo$ and $\bjps=1$. After ordering each term of \eqref{eq2.3}, we get the following representations of the set of tricomplex numbers:
	\begin{align}
	\mM (3) &:= \oa \eta = \zeta_1 +  \zeta_2\bb \, |\, \zeta_1, \zeta_2 \in \mM (2) \fa \notag\\
&=\oa z_1+ z_2 \bt+  z_3 \bb+ z_4 \bjt  \, |\, z_1,z_2,z_3,z_4 \in \mM (1) \fa \notag\\
&=\oa x_0+ x_1\bo + x_2\bt  +  x_3\bb + x_4\bq  +  x_5\bjp +  x_6 \bjd+  x_7\bjt \, \right. \notag \\ & \qquad \qquad \left. |\, x_i \in \mM (0)=\mR \text{ for } i=0,1,2, \ldots , 7 \fa \text{.} \label{EqRep}
	\end{align}
Let $\eta_1=\zeta_1+\zeta_2\bb$ and $\eta_2=\zeta_3 + \zeta_4\bb$ be two tricomplex numbers with $\zeta_1,\zeta_2,\zeta_3,\zeta_4 \in \mM (2)$. We define the equality, the addition and the multiplication of two tricomplex numbers as
	\begin{align}
	\eta_1&=\eta_2 \text{ iff } \zeta_1=\zeta_3 \text{ and } \zeta_2=\zeta_4 \label{eq2.4}\\
\eta_1 + \eta_2 &:= (\zeta_1 + \zeta_3) + (\zeta_2+\zeta_4)\bb \label{eq2.5}\\
\eta_1 \cdot \eta_2&:= (\zeta_1\zeta_3-\zeta_2\zeta_4)+(\zeta_1\zeta_4 + \zeta_2\zeta_3)\bb \label{eq2.6}\text{.}
	\end{align}
Table \ref{tabC1} shows the results after multiplying each tricomplex imaginary unity two by two.
	\begin{table}
	\centering
		\begin{tabular}{c|*{9}{c}}
		$\cdot$ & 1 & $\mathbf{i_1}$ & $\mathbf{i_2}$ & $\mathbf{i_3}$ & $\mathbf{i_4}$ & $\bjp$ & $\bjd$ & $\mathbf{j_3}$\\\hline
1 & 1 & $\mathbf{i_1}$ & $\mathbf{i_2}$ & $\mathbf{i_3}$ & $\mathbf{i_4}$ & $\bjp$ & $\bjd$ & $\mathbf{j_3}$\\
$\mathbf{i_1}$ & $\mathbf{i_1}$ & $-\mathbf{1}$ & $\bjp$ & $\bjd$ & $-\mathbf{j_3}$ & $-\mathbf{i_2}$ & $-\mathbf{i_3}$ & $\mathbf{i_4}$\\
$\mathbf{i_2}$ & $\mathbf{i_2}$ & $\bjp$ & $-\mathbf{1}$ & $\mathbf{j_3}$ & $-\bjd$ & $-\mathbf{i_1}$ & $\mathbf{i_4}$ & $-\mathbf{i_3}$\\
$\mathbf{i_3}$ & $\mathbf{i_3}$ & $\bjd$ & $\mathbf{j_3}$ & $-\mathbf{1}$ & $-\bjp$ & $\mathbf{i_4}$ & $-\mathbf{i_1}$ & $-\mathbf{i_2}$\\
$\mathbf{i_4}$ & $\mathbf{i_4}$ & $-\mathbf{j_3}$  & $-\bjd$ & $-\bjp$ & $-\mathbf{1}$ & $\mathbf{i_3}$ & $\mathbf{i_2}$ & $\mathbf{i_1}$\\
$\bjp$ & $\bjp$ & $-\mathbf{i_2}$  & $-\mathbf{i_1}$ & $\mathbf{i_4}$ & $\mathbf{i_3}$ & $\mathbf{1}$ & $-\mathbf{j_3}$ & $-\bjd$\\
$\bjd$ & $\bjd$ & $-\mathbf{i_3}$  & $\mathbf{i_4}$ & $-\mathbf{i_1}$ & $\mathbf{i_2}$ & $-\mathbf{j_3}$ &  $\mathbf{1}$ & $-\bjp$\\
$\mathbf{j_3}$ & $\mathbf{j_3}$ & $\mathbf{i_4}$ &$-\mathbf{i_3}$  & $-\mathbf{i_2}$ & $\mathbf{i_1}$ & $-\bjd$ & $-\bjp$ &  $\mathbf{1}$ \\
		\end{tabular}
	\caption{Products  of tricomplex imaginary units}\label{tabC1}
	\end{table}
The set of tricomplex numbers with addition $+$ and multiplication $\cdot$ forms a commutative ring with zero divisors.

A tricomplex number has a useful representation using the idempotent elements $\ett =\frac{1+\bjt}{2}$ and $\etc =\frac{1-\bjt}{2}$. Recalling that $\eta = \zeta_1 +  \zeta_2\bb$ with $\zeta_1, \zeta_2 \in \mM (2)$, the idempotent representation of $\eta$ is
	\begin{equation}
	\eta = (\zeta_1- \zeta_2\bt)\ett + (\zeta_1+ \zeta_2\bt)\etc \label{eq2.7}\text{.}
	\end{equation}
The representation \eqref{eq2.7} of a tricomplex number allows to add and multiply tricomplex numbers term-by-term. In fact, we have the following theorem (see \cite{Baley}):
	\begin{theorem}\label{theo2.2}
	Let $\eta_1=\zeta_1 +  \zeta_2\bb$ and $\eta_2=\zeta_3 +  \zeta_4\bb$ be two tricomplex numbers. Let $\eta_1=u_1\ett + u_2 \etc$ and $\eta_2=u_3\ett + u_4\etc$ be the idempotent representation \eqref{eq2.7} of $\eta_1$ and $\eta_2$. Then,
	\begin{enumerate}
\item $\eta_1+\eta_2=(u_1+u_3)\ett + (u_2+u_4)\etc$;
\item $\eta_1 \cdot \eta_2 = (u_1 \cdot u_3)\ett + (u_2 \cdot u_4)\etc$;
\item $\eta_1^m=u_1^m \ett + u_2^m \etc$ $\forall m \in \mN$.
	\end{enumerate}
	\end{theorem}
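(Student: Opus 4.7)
The strategy is to reduce everything to two algebraic facts about the idempotents $\eb$ and $\ebc$ already alluded to before the statement, namely
\[
\ett^2 = \ett, \qquad \etc^{\,2} = \etc, \qquad \ett + \etc = 1, \qquad \ett \cdot \etc = 0.
\]
The first three are immediate from $\mathbf{j_3}^2 = 1$ (so $(1\pm\mathbf{j_3})^2 = 2(1\pm\mathbf{j_3})$), while $\ett\etc = \tfrac{1}{4}(1-\mathbf{j_3}^2) = 0$. I would state and verify these identities at the top of the proof; this is the only place where a computation with the defining relations is needed.

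For part (1), once the representation \eqref{eq2.7} is in hand, addition is just grouping like terms: $\eta_1+\eta_2 = u_1\ett+u_2\etc + u_3\ett+u_4\etc = (u_1+u_3)\ett+(u_2+u_4)\etc$. The only thing to remark is that $u_1+u_3$ and $u_2+u_4$ are bicomplex since $\mM(2)$ is closed under addition. For part (2), I would expand
\[
\eta_1\cdot\eta_2 = (u_1\ett+u_2\etc)(u_3\ett+u_4\etc) = u_1u_3\,\ett^{\,2} + u_1u_4\,\ett\etc + u_2u_3\,\etc\ett + u_2u_4\,\etc^{\,2},
\]
which is permissible because $\mM(3)$ is a commutative ring (so the bicomplex coefficients $u_i$ commute past the idempotents $\ett,\etc$). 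Applying the four identities above collapses this to $(u_1u_3)\ett + (u_2u_4)\etc$, which is exactly the term-by-term formula.

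Part (3) follows from part (2) by a one-line induction on $m$: the case $m=1$ is trivial, and if $\eta^m = u_1^m\ett+u_2^m\etc$ then part (2) gives $\eta^{m+1} = \eta^m\cdot\eta = (u_1^m\cdot u_1)\ett + (u_2^m\cdot u_2)\etc = u_1^{m+1}\ett+u_2^{m+1}\etc$.

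\paragraph{Main obstacle.} There is no real obstacle beyond bookkeeping; the whole theorem is really a manifestation of the ring isomorphism $\mM(3) \cong \mM(2)\oplus\mM(2)$ induced by the idempotent decomposition. The only subtlety I would flag explicitly is that commutativity of $\mM(3)$ (established just above Theorem \ref{theo2.2} via Table \ref{tabC1}) is being used when expanding the product in part (2), since the bicomplex factors $u_i$ must be moved past $\ett$ and $\etc$. Once that is noted, the proof is essentially a three-line verification.
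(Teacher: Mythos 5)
Your proof is correct and complete: the identities $\ett^2=\ett$, $\etc^{\,2}=\etc$, $\ett\etc=0$ and $\ett+\etc=1$ (all immediate from $\bjts=1$), together with commutativity and distributivity in $\mM (3)$ and a short induction for part (3), are exactly what is needed. Note that the paper offers no proof of its own here—Theorem \ref{theo2.2} is simply quoted from Price \cite{Baley}—so there is nothing to compare against; your argument is the standard one and would serve as a self-contained justification.
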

Moreover, we define a $\mM (3)$-\textit{Cartesian} set $X$ of two subsets $X_1,X_2\subseteq\mM (2)$ as follows:
	\begin{align}
	X=X_1\times_{\ett}X_2&:=\oa \eta =\zeta_1+ \zeta_2\bb \in \mM (3) \, | \, \eta = u_1 \ett + u_2 \etc , \right. \notag\\ 
	& \qquad \left. u_1 \in X_1 \text{ and } u_2 \in X_2 \fa \text{.}\label{eq2.14}
	\end{align}

Let define the norm $\Vert \cdot \Vert_3 :\, \mM (3) \rightarrow \mR$ of a tricomplex number $\eta=\zeta_1 + \zeta_2\bb$ as
	\begin{align}
	\Vert \eta \Vert_3  := \sqrt{\Vert\zeta_1\Vert_2^2+\Vert\zeta_2\Vert_2^2}=\sqrt{\sum_{i=1}^2|z_i|^2+\sum_{i=3}^4|z_i|^2}
=\sqrt{\sum_{i=0}^7x_i^2}.\label{eq2.15}
	\end{align}
We also have that $\Vert \eta \Vert_3 = \frac{\sqrt{\Vert \zeta_1 - \zeta_2 \bt \Vert_2^2 + \Vert \zeta_1 + \zeta_2 \bt \Vert_2^2}}{2}$. According to the Euclidean norm \eqref{eq2.15}, we say that a sequence $\oa s_m \fa_{m=1}^{\infty} $ of tricomplex numbers is bounded if and only if there exists a real number $M$ such that $\Vert s_m \Vert_3 \leq M$ for all $m \in \mN$. Now,
according to \eqref{eq2.14}, we define two kinds of tricomplex discus:
	\begin{definition}\label{def2.1}
	Let $\alpha = \alpha_1+\alpha_2 \bb \in \mM (3)$ and set $r_2\geq r_1 > 0$.
	\begin{enumerate}
	\item The open discus is the set
	\begin{align}
	{\bf{D_3}}(\alpha ; r_1,r_2)&:= \oa  \eta \in \mM (3) \, | \, \eta =\zeta_1 \ett + \zeta_2\etc , \, \Vert \zeta_1-(\alpha_1- \alpha_2 \bt)\Vert_2 <r_1 \text{ and }\right.\notag \\ &\left. \qquad \qquad \qquad \Vert \zeta_2 - (\alpha_1+\alpha_2 \bt) \Vert_2 <r_2 \fa \text{.}\label{eq2.141}
	\end{align}
	\item The closed discus is the set
	\begin{align}
	\Ol{\bf{D_3}}(\alpha ; r_1,r_2)&:= \oa  \eta \in \mM (3) \, | \, \eta =\zeta_1 \ett + \zeta_2\etc , \, \Vert \zeta_1-(\alpha_1- \alpha_2\bt)\Vert_2 \leq r_1 \text{ and }\right.\notag \\ &\left. \qquad \qquad \qquad \Vert \zeta_2 - (\alpha_1+ \alpha_2 \bt) \Vert_2 \leq r_2 \fa \text{.}\label{eq2.142}
	\end{align}
	\end{enumerate}
	\end{definition}

We end this section by several remarks about subsets of $\mM (3)$. Let the set $\mC (\bk ):=\oa \eta = x_0 + x_1 \bk \, | \, x_0, x_1 \in \mR \fa, \bk \in \oa \bo , \bt , \bb , \bq \fa $. So, $\mC (\bk )$ is a subset of $\mM (3)$ for $k=1,2,3,4$ and we also remark that they are all isomorphic to $\mC$. Furthermore, the set $\mH (\bjk ):=\oa x_0 + x_1 \bjk \, | \, x_0, x_1 \in \mR\fa$ where  $\bjk \in \oa \bjp , \bjd, \bjt  \fa$ is a subset of $\mM (3)$ and is isomorphic to the set of hyperbolic numbers $\mH$ for $k\in\{1,2,3\}$ (see \cite{RochonShapiro, vajiac2} and \cite{Sobczyk} for further details about the set $\mH$ of hyperbolic numbers). The $\bjk$-part of a hyperbolic number is $\mathrm{Hy} ( x_0 + x_1 \bjk ) = x_1$. Finally, we recall (see \cite{RochonParise} and \cite{GarantRochon}) another important subset of $\mM (3)$ useful in the next section.
	\begin{definition}\label{Tikilim}
	Let $\bk , \bil , \bim \in \oa 1, \bo , \bt , \bb , \bq , \bjp , \bjd , \bjt \fa$ with $\bk \neq \bil$, $\bk \neq \bim$ and $\bil \neq \bim$. We define a 3D subset of $\mM (3)$ as
		\begin{equation}
		\mT (\bim , \bk , \bil ):= \oa x_1\bk + x_2\bil + x_3\bim \, | \, x_1,x_2,x_3 \in \mR \fa \text{.}
		\end{equation}
	\end{definition}

\section{Tricomplex dynamics}
In section \ref{Pre}, we mentioned that the set of complex numbers is isomorphic to the set $\mC (\bk )$ where $\bk \in \oa \bo , \bt , \bb , \bq \fa$ and the set of hyperbolic numbers is isomorphic to $\mD (\bjk )$ where $\bjk \in \oa \bjp , \bjd , \bjt \fa$. So, the Multibrot sets, the Hyperbrot sets and the bicomplex Multibrot sets as defined in \cite{Chine1} and \cite{RochonParise} are generalized by defining the tricomplex Multibrot sets.
	\begin{definition}\label{d3.1}
	Let $Q_{p,c}(\eta )=\eta^p+c$ where $\eta , c \in \mM (3)$ and $p\geq 2$ an integer. The tricomplex \textit{Multibrot} set is defined as the set
		\begin{equation}
		\mMan_3^p:=\oa c \in \mM (3) \, | \, \Qpit \text{ is bounded } \fa \text{.}
		\end{equation}
	\end{definition}

	\begin{remark}\label{rem3.1}
If we restrict the components of the tricomplex number $c$ to the subspace $\mC (\bo ) \simeq \mC$, we see that the set $\mMan_3^p$ is equal (in $\mR^2$) to the Multibrot set $\mMan^p$ as defined in the introduction. Moreover, if we restrict the components of the number $c$ to the subspace $\mH (\bjp )$, we see that the set $\mMan_3^p$ is equal (in $\mR^2$) to the hyperbolic Multibrot set $\mHyb^p$ also defined in the introduction. So, for the rest of this article, we identify the $\mMan^p$ and $\mHyb^p$ sets with the two following sets:
		\begin{align*}
		 \mMan^p = \oa c = x_0 + x_1\bo \, | \, \Qpit \text{ is bounded } \fa \\
		 \text{ and }\mHyb^p = \oa c = x_0 + x_5\bjp \, | \, \Qpit \text{ is bounded } \fa.
		\end{align*}
	\end{remark}
	
The Multibrot set $\mMan^p$ has the following important property (see \cite{RochonParise}).
	\begin{theorem}\label{t2.2.2}
	A complex number $c$ is in $\mManp$ if and only if $|Q_{p,c}^m(0)|\leq 2^{1/(p-1)}$ $\forall m \in \mN$.
	\end{theorem}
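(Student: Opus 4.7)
The plan is to prove both directions of the equivalence by direct estimates on $|Q_{p,c}^m(0)|$. Write $R := 2^{1/(p-1)}$ and $z_m := Q_{p,c}^m(0)$, so that $z_0 = 0$ and $z_{m+1} = z_m^p + c$. Note that $R$ is characterized by the identity $R^{p-1} = 2$, which is the source of the bound.

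The ``if'' direction is immediate from the definition of $\mManp$: if $|z_m| \le R$ for every $m \in \mN$, then $\oa z_m \fa_{m=1}^{\infty}$ is bounded, so $c \in \mManp$.

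For the converse I would argue by contrapositive: if there exists $n_0$ with $|z_{n_0}| > R$, then $|z_m| \to \infty$. The workhorse estimate is the reverse triangle inequality
\[
|z_{n+1}| = |z_n^p + c| \ge |z_n|^p - |c|.
\]
Provided $|z_n| > R$ and $|c| \le |z_n|$, this rearranges to
\[
|z_{n+1}| \ge |z_n|\op |z_n|^{p-1} - 1\fp \ge \lambda |z_n|,
\]
with $\lambda := |z_{n_0}|^{p-1} - 1 > 1$, since $|z_{n_0}|^{p-1} > R^{p-1} = 2$. Iterating then gives $|z_m| \ge \lambda^{m - n_0} |z_{n_0}| \to \infty$.

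To secure the side condition $|c| \le |z_n|$, I would split into cases. If $|c| > R$, then $|z_1| = |c| > R$, so I may replace $n_0$ by $1$, at which point $|c| = |z_{n_0}|$ holds trivially. If instead $|c| \le R$, then at $n = n_0$ one has $|c| \le R < |z_{n_0}|$ directly. In either scenario, the estimate above produces $|z_{n_0+1}| > |z_{n_0}|$, and a straightforward induction shows $|z_n|$ is strictly increasing for $n \ge n_0$, which keeps $|c| \le |z_{n_0}| \le |z_n|$ available at every step of the induction.

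The main obstacle is organizational rather than conceptual: one must couple the case split on $|c|$ versus $R$ with the induction on $n$ so that the hypothesis $|c| \le |z_n|$ is continuously maintained along the orbit. Once that bookkeeping is settled, the growth step is a purely algebraic consequence of $R^{p-1} = 2$, which is exactly why $2^{1/(p-1)}$ is the sharp escape radius for the family $\eta \mapsto \eta^p + c$.
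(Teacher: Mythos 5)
Your proof is correct: the case split on $|c|$ versus $2^{1/(p-1)}$, combined with the induction that keeps $|z_n|\ge |z_{n_0}|\ge |c|$ along the orbit so the growth factor $\lambda=|z_{n_0}|^{p-1}-1>1$ applies at every step, is exactly the sharp escape-radius argument. Note that the paper itself gives no proof of Theorem \ref{t2.2.2} but cites it from \cite{RochonParise}, and your argument is essentially the standard proof found there (the degree-$p$ generalization of the classical Mandelbrot escape criterion), so the approach is the same rather than a genuinely different route.
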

	
Given an integer $p \geq 2$, the set $\mMan_3^p$ is a closed and connected set, and also it is contained in the closed discus $\overline{\mathbf{D}}(0,2^{1/(p-1)}, 2^{1/(p-1)})$. In the last part of the article, those information are used to generate the images of several 3D slices. From \cite{RochonParise}, $\mMan_3^p$ can be viewed as the tricomplex Cartesian product
	\begin{equation}
	\mMan_3^p = \mMan_2^p \times_{\ett} \mMan_2^p
	\end{equation}
where $\mMan_2^p$ is the bicomplex Multibrot set. Moreover, the set $\mMan_3^p$ can be rewritten as the following mixed Cartesian product
	\begin{equation}
	\mMan_3^p = (\mMan_1^p \times_{\eb} \mMan_1^p) \times_{\ett} (\mMan_1^p \times_{\eb} \mMan_1^p) 
	\end{equation}
where the set $\mMan_1^p$ is the Multibrot set in the complex plane.

To visualize the tricomplex Multibrot sets, we define a \textbf{principal 3D slice} as follows
	\begin{equation}\label{eq5.2.1}
	\mTet^p:= \mTet^p(\bim , \bk , \bil ) =\oa c \in \mT (\bim , \bk , \bil ) \, | \, \oa Q_{p,c}^m(0) \fa_{m=1}^{\infty} \text{ is bounded } \fa
\text{.}
	\end{equation}
So the number $c$ has three of its components that are not equal to zero. In total, there are 56 possible combinations of principal 3D slices. To attempt a classification of these slices, we introduce a relation $\sim$ (see \cite{GarantRochon}).

\begin{definition}\label{def5.2.1}
Let $\mTet_1^p( \bim , \bk , \bil )$ and $\mTet_2^p( \bn , \biq , \bis )$ be two principal 
3D slices of a tricomplex Multibrot set $\mathcal{M}_3^p$. Then, $\mTet_1^p \sim \mTet_2^p$ 
if we have a bijective linear mapping $\varphi:\mM (3)$ $\rightarrow$ $\mM (3)$ such that 
$\forall c_2 \in \mT(\bn , \biq , \bis )$ there exists a $c_1 \in \mT(\bim , \bk , \bil )$ with $\varphi(c_1)=c_2$
and $$(\varphi \circ Q_{p,c_1} \circ \varphi^{-1})(\eta )=Q_{p,c_2}(\eta )\mbox{ }\forall \eta\in \mM (3).$$ 
In that case, we say that $\mTet_1^p$ and $\mTet_2^p$ have the same dynamics.
\end{definition}

If two 3D slices are in relationship in term of $\sim$, then we also say that they are symmetrical. This comes from the fact that their visualizations by a computer give the same images, but oriented differently. In \cite{Parise}, it is showed that $\sim$ is also an equivalent relation on the set of principal 3D slices of $\mMan_3^p$.

\section{Roots of polynomial and applications}
In this section, we show some applications of the locus of the roots of polynomials to complex and hyperbolic dynamics. In addition, we give a generalized 3D version of the Hyperbrot sets and prove that our generalization is an octahedron in the case of the polynomial $Q_{p,c}$ with $ p > 2$ an odd integer. We first state some results on the locus of the roots of the polynomial $x^p - x + c$. These results will be useful to prove the Theorem \ref{t2.3.1} which states that the intersection of $\mMan^p$ and the real axis is exactly $\left[ -\frac{p-1}{p^{p/(p-1)}}, \frac{p-1}{p^{p/(p-1)}} \right]$. 

	\subsection{Behavior of the roots}
Let $R_{p,c}(x) = x^p-x+c$ where $x,c\in \mR$. In this part, we study the exact location of the real roots of $R_{p,c}$. We also let $p > 2$ be an odd integer.


We start by computing $R'_{p,c}(x) = px^{p-1} - 1$ where $p-1$ is even. Letting $R'_{p,c}(x) = 0$ gives the following real critical points of $R_{p,c}(x)$
	\begin{equation}
	w_1 = -\frac{1}{p^{1/(p-1)}} \text{ and } w_2 = \frac{1}{p^{1/(p-1)}}\text{.}\label{critPoints}
	\end{equation}
Moreover, $R'_{p,c}(x) > 0$ when $|x| > \frac{1}{p^{1/(p-1)}}$ and $R'_{p,c}(x) < 0$ when $|x| < \frac{1}{p^{1/(p-1)}}$.  The values taken by $R_{p,c}(x)$ at the critical points are
	\begin{align}
	R_{p,c}(w_1)& = \left( \dfrac{p-1}{p^{p/(p-1)}} \right) + c,\label{w1V}\\
	R_{p,c}(w_2)& = -\left( \dfrac{p-1}{p^{p/(p-1)}} \right) + c \label{w2V}\text{.}
	\end{align}
Furthermore, $R''_{p,c}(x) = p(p-1)x^{p-2}$ with $p-2 \geq 1$ an odd integer. Then, we have that
	\begin{align*}
	R''_{p,c}(x) < 0 \text{ and } R''_{p,c}(x) > 0 
	\end{align*}
for $x < 0$ and $x > 0$ respectively, and so
	\begin{align*}
	R''_{p,c}\left( w_1\right) < 0 \text{ and } R''_{p,c}\left( w_2 \right) > 0\text{.}
	\end{align*}
	
Thus, $R_{p,c}(x)$ is increasing on $\left( -\infty , w_1 \right) \cup \left( w_2, +\infty \right)$ and decreasing on $\left( w_1 , w_2 \right)$. $R_{p,c}(x)$ has a local maximum at $w_1$ given by \eqref{w1V} and has a local minimum at $w_2$ given by \eqref{w2V}. Following that behavior study of $R_{p,c}(x)$, we give a first lemma that characterized the roots for $|c| < \frac{p-1}{p^{p/(p-1)}}$.

	\begin{lemma}\label{l2.1.1}
	Let $p > 2$ be an odd integer. If $c \in \left(-\frac{p-1}{p^{p/(p-1)}},\; \frac{p-1}{p^{p/(p-1)}}\right)$ then $R_{p,c}$ has three distinct real roots $a_1$, $a_2$ and $a_3$ such that
		\begin{align*}
		a_1 \in \left( -\infty ,\; w_1 \right)  \text{, } 
		a_2 \in \left( w_1,\; w_2\right)\text{ and } a_3 \in \left( w_2,\; +\infty \right) \text{.}&
		\end{align*}
	\end{lemma}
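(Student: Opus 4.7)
The plan is to combine the monotonicity/extremum data the authors have just established with the Intermediate Value Theorem, applied on each of the three monotonicity intervals of $R_{p,c}$.

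First, I would record the sign of $R_{p,c}$ at the two critical points under the hypothesis $|c| < \frac{p-1}{p^{p/(p-1)}}$. From \eqref{w1V} and \eqref{w2V}, the inequality $c > -\frac{p-1}{p^{p/(p-1)}}$ gives $R_{p,c}(w_1) > 0$, while $c < \frac{p-1}{p^{p/(p-1)}}$ gives $R_{p,c}(w_2) < 0$. So the local maximum is strictly positive and the local minimum is strictly negative.

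Next, I would use the end behavior. Since $p$ is odd, $\lim_{x\to -\infty} R_{p,c}(x) = -\infty$ and $\lim_{x\to +\infty} R_{p,c}(x) = +\infty$. Combined with the preceding derivative analysis, $R_{p,c}$ is strictly increasing on $(-\infty, w_1)$, strictly decreasing on $(w_1, w_2)$, and strictly increasing on $(w_2, +\infty)$.

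Then I would apply the IVT on each of these three intervals.
\begin{itemize}
\item On $(-\infty, w_1)$: $R_{p,c}$ goes from $-\infty$ to $R_{p,c}(w_1) > 0$, so by strict monotonicity there is exactly one root $a_1 \in (-\infty, w_1)$.
\item On $(w_1, w_2)$: $R_{p,c}$ goes from $R_{p,c}(w_1) > 0$ to $R_{p,c}(w_2) < 0$, so by strict monotonicity there is exactly one root $a_2 \in (w_1, w_2)$.
\item On $(w_2, +\infty)$: $R_{p,c}$ goes from $R_{p,c}(w_2) < 0$ to $+\infty$, so by strict monotonicity there is exactly one root $a_3 \in (w_2, +\infty)$.
\end{itemize}
The three roots lie in disjoint open intervals, so they are automatically distinct.

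There is no real obstacle here; the authors have essentially done all the heavy lifting in the derivative analysis preceding the statement. The only thing to be careful about is verifying that each interval really does receive a strict sign change at its endpoints, which is exactly where the two-sided bound $|c| < \frac{p-1}{p^{p/(p-1)}}$ is used (an equality would give a repeated root at $w_1$ or $w_2$, and the conclusion would fail).
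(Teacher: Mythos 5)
Your proof is correct and follows the same route as the paper: sign of $R_{p,c}$ at the critical points $w_1$, $w_2$ under the hypothesis on $c$, the end behavior as $x\to\pm\infty$, and the Intermediate Value Theorem on the three monotonicity intervals. The paper's argument is just a more compressed version of exactly this.
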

	\begin{proof}
	Since $R_{p,c}$ is continuous and $\lim\limits_{x \rightarrow -\infty} R_{p,c}(x) = -\infty$, $R_{p,c}(w_1)>0$, 
	$R_{p,c}(w_2)$ $<0$ and 
	$\lim\limits_{x \rightarrow +\infty}  R_{p,c}(x) = \infty$ for $c \in \left(-\frac{p-1}{p^{p/(p-1)}},\; \frac{p-1}{p^{p/(p-1)}}\right)$, there exist three distinct zeros $a_1$, $a_2$ and $a_3$ situated as desired.
	\end{proof}

	\begin{remark}\label{rem3.2.1} 
	Suppose that $|c| < \frac{p-1}{p^{p/(p-1)}}$. In that case, since $R_{p,c}$ is monotonic, we obtain that
	\begin{align}
	R_{p,c}(x) > 0 & \quad \forall x \in \left( a_1, w_1\right)  \label{eqpos}\\
	\text{and } R_{p,c}(x) < 0 & \quad \forall x \in \left( w_2,a_3\right) \label{eqneg}\text{.}
	\end{align}
	Therefore, $a_1$, $a_2$ and $a_3$ are uniquely determined.
	\end{remark}

We can get a better approximation of the roots $a_1$ and $a_3$ depending on the sign of the number $c$. The next lemma encapsulates this new approximations.

	\begin{lemma}\label{l2.1.2}
	Let $R_{p,c}(x) = x^p-x+c$ where $x,c \in \mR$ and $p$ is an odd integer greater than 2. Then
	\begin{itemize}
	\item[1.] if $c = 0$, then $a_1 = -1$ and $a_3 = 1$;
	\item[2.] if $c \in \left( -\frac{p-1}{p^{p/(p-1)}}, 0 \right)$, then $a_1 \in \left( -1,w_1\right)$;
	\item[3.] if $c \in \left( 0, \frac{p-1}{p^{p/(p-1)}} \right)$, then $a_3 \in \left(w_2,1\right)$.
	\end{itemize}
	\end{lemma}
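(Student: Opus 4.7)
The plan is to reduce each of the three cases to a short monotonicity argument based on the behavior of $R_{p,c}$ already established above Lemma \ref{l2.1.1}, together with the single computation $R_{p,c}(\pm 1)=c$ (which uses $p$ odd, so $(-1)^p=-1$).

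For part (1), I would simply substitute $c=0$ and factor: $R_{p,0}(x)=x(x^{p-1}-1)$. Since $p-1$ is even, the real solutions of $x^{p-1}=1$ are exactly $\pm 1$, giving the three real roots $-1,\,0,\,1$. A quick check that $-1<w_1<0<w_2<1$ (which follows from $w_2=p^{-1/(p-1)}<1$ because $p>1$) together with Lemma \ref{l2.1.1} forces $a_1=-1$, $a_2=0$, $a_3=1$.

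For parts (2) and (3), the key observation is that for odd $p$
\begin{equation*}
R_{p,c}(-1)=-1+1+c=c \quad\text{and}\quad R_{p,c}(1)=1-1+c=c.
\end{equation*}
In part (2), with $c<0$, I would note that $R_{p,c}(-1)=c<0$, while from \eqref{w1V} and the hypothesis $c>-\tfrac{p-1}{p^{p/(p-1)}}$ we have $R_{p,c}(w_1)>0$. Since $R_{p,c}$ is strictly increasing on $(-\infty,w_1)$ and vanishes at $a_1\in(-\infty,w_1)$, the ordering of the values forces $-1<a_1<w_1$. Part (3) is symmetric: with $c>0$ one uses $R_{p,c}(1)=c>0$, $R_{p,c}(w_2)<0$ by \eqref{w2V} and the upper bound on $c$, and monotonicity on $(w_2,+\infty)$ to conclude $w_2<a_3<1$.

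There is essentially no obstacle — the whole argument rides on the preparatory computations \eqref{critPoints}, \eqref{w1V}, \eqref{w2V} and the monotonicity intervals already recorded, so the only thing I need to make sure of is the sign bookkeeping coming from $p$ odd when evaluating $R_{p,c}$ at $\pm 1$, which is why the hypothesis on the parity of $p$ is needed here.
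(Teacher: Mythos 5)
Your proof is correct, but it takes a different route from the paper's for parts (2) and (3). The paper argues by contradiction: assuming $a_1\le -1$, it compares $R_{p,c}$ with $R_{p,0}$ (using $c<0$ to get $R_{p,c}<R_{p,0}$ pointwise) and invokes the sign information $R_{p,c}>0$ on $(a_1,w_1)$ from Remark \ref{rem3.2.1} to force $R_{p,0}(-1)>0$, contradicting $R_{p,0}(-1)=0$ from part (1); part (3) is treated symmetrically. You instead evaluate $R_{p,c}(\pm 1)=c$ directly and use strict monotonicity of $R_{p,c}$ on $\left(-\infty,w_1\right)$ and $\left(w_2,+\infty\right)$: since $R_{p,c}(-1)=c<0=R_{p,c}(a_1)$ with both points in the interval where $R_{p,c}$ is increasing, you get $-1<a_1$, and dually $a_3<1$ when $c>0$. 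This is a legitimate and arguably cleaner argument --- it avoids the comparison with $R_{p,0}$ and the appeal to \eqref{eqpos}, needing only the derivative sign analysis preceding Lemma \ref{l2.1.1} (which gives strict increase off $[w_1,w_2]$) plus Lemma \ref{l2.1.1} for the existence and placement of $a_1$ and $a_3$; the paper's version, in exchange, reuses part (1) and the sign bookkeeping of Remark \ref{rem3.2.1} rather than introducing the evaluation at $\pm 1$. One cosmetic remark: your citation of $R_{p,c}(w_1)>0$ (resp.\ $R_{p,c}(w_2)<0$) is not actually needed for the monotonicity step, since it is already what underlies Lemma \ref{l2.1.1}; the essential inputs are $R_{p,c}(\mp 1)=c$ and the strict monotonicity on the outer intervals.
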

	\begin{proof}
	From Lemma \ref{l2.1.1}, we know that $R_{p,c}(x)$ has three distinct real roots $a_1$, $a_2$ and $a_3$. Let consider the special case where $c = 0$. In this case, we have that $R_{p,c}(x) = x^p-x = x(x^{p-1}-1)$ which implies that $R_{p,c}(x) = 0$ if and only if $a_1 = -1$, $a_2 = 0$ and $a_3 = 1$. For the second case, suppose by contradiction that $a_1 \leq -1$ for a certain $c \in  \left( -\frac{p-1}{p^{p/(p-1)}}, 0 \right)$. If $a_1 = -1$, then we have that
		\begin{align*}
 		a_1^p-a_1+c=R_{p,c}(a_1) = 0 = R_{p,0}(a_1)=a_1^p-a_1
		\end{align*}
which implies that $c = 0$. This is a contradiction. Suppose now that $a_1 < -1$. Since $c < 0$, we see that $R_{p,c}(x) = x^p-x+c < x^p-x = R_{p,0}(x)$ $\forall x \in \mR$. In particular, this inequality holds on the interval $( a_1 , w_1)$. So, from \eqref{eqpos}, we obtain that $R_{p,0}(x) > 0$ on the interval $( a_1 , w_1)$. In particular, $R_{p,0}(-1) > 0$ since $a_1 < -1 < w_1$. However, $R_{p,0}(-1) = 0$ from the first part, this is a contradiction. Thus, we have that $a_1 > -1$.

	The third case is handled similarly as the second one by supposing that $a_3 \geq 1$ which leads to a contradiction.
	\end{proof}

The next lemma explains what is happening at the boundary of the interval $\left[ -\frac{p-1}{p^{p/(p-1)}}, \frac{p-1}{p^{p/(p-1)}} \right]$.

	\begin{lemma}\label{l2.1.3}
	Let $R_{p,c}(x) = x^p-x+c$ where $x,c \in \mR$ and $p$ is an odd integer greater than 2. Then
	\begin{itemize}
	\item[1.] if $c = -\frac{p-1}{p^{p/(p-1)}}$, then $R_{p,c}$ has two real roots $w_1$ and $a_3$ where $w_1$ has multiplicity 2 and $a_3 \in \left( w_2, +\infty \right)$;
	\item[2.] if $c = \frac{p-1}{p^{p/(p-1)}}$, then $R_{p,c}$ has two real roots $w_2$ and $a_1$ where $w_2$ has multiplicity 2 and $a_1 \in \left( -\infty , w_1\right)$.
	\end{itemize}
	\end{lemma}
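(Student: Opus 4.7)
The plan is to handle the two cases separately, with the second following by an easy symmetry argument (replacing $c$ by $-c$ and $x$ by $-x$, noting that $p$ is odd). For case 1, set $c_0 = -\frac{p-1}{p^{p/(p-1)}}$. The key observation is that with this choice, equation \eqref{w1V} gives $R_{p,c_0}(w_1) = 0$, so $w_1$ is already a real root. Simultaneously, $w_1$ is a critical point by definition, so $R'_{p,c_0}(w_1) = 0$ as well. Hence $w_1$ is a root of multiplicity at least two.

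Next I would show the multiplicity is exactly two by verifying $R''_{p,c_0}(w_1) \neq 0$. This is immediate from the earlier behavior analysis: $R''_{p,c}(x) = p(p-1)x^{p-2}$ with $p-2$ odd, so $R''_{p,c_0}(w_1) < 0$ since $w_1 < 0$. Thus $w_1$ contributes exactly a double root.

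Then I would locate the remaining real root. The monotonicity picture established before Lemma \ref{l2.1.1} still applies at the boundary: $R_{p,c_0}$ is strictly increasing on $(-\infty, w_1)$, strictly decreasing on $(w_1, w_2)$, and strictly increasing on $(w_2, +\infty)$. On $(-\infty, w_1)$ the function rises from $-\infty$ to $0$, so $w_1$ is its only zero there. On $(w_1, w_2)$ it decreases from $0$ to $R_{p,c_0}(w_2) = -\frac{2(p-1)}{p^{p/(p-1)}} < 0$, so there are no zeros in the open interval. On $(w_2, +\infty)$ it increases from a negative value to $+\infty$, so by the Intermediate Value Theorem there is exactly one zero $a_3 \in (w_2, +\infty)$. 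This exhausts the real zeros and proves case 1.

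For case 2, I would observe that $R_{p,-c}(-x) = -R_{p,c}(x)$ (using that $p$ is odd), so the real roots of $R_{p,c_0'}$ with $c_0' = \frac{p-1}{p^{p/(p-1)}}$ are precisely the negatives of the real roots of $R_{p,-c_0'} = R_{p,c_0}$, with the same multiplicities. Since $-w_1 = w_2$, case 1 immediately yields that $w_2$ is a double root and the third real root $a_1 = -a_3 \in (-\infty, w_1)$. The only subtle point — and really the only step that could cause trouble — is making sure that the strict monotonicity on each of the three intervals forbids any additional real root; but this was already established from the signs of $R'_{p,c}$, independent of the value of $c$, so no new obstacle arises.
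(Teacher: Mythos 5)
Your proposal is correct and follows essentially the same route as the paper: observe that $R_{p,c}(w_1)=R'_{p,c}(w_1)=0$ at $c=-\frac{p-1}{p^{p/(p-1)}}$, hence a double root at $w_1$, and then use the monotonicity picture to produce the single remaining root $a_3\in(w_2,+\infty)$. Your two refinements — checking $R''_{p,c}(w_1)\neq 0$ to pin the multiplicity at exactly two, and deducing case 2 from case 1 via the odd symmetry $R_{p,-c}(-x)=-R_{p,c}(x)$ instead of repeating the argument — are harmless (indeed slightly tidier) variations on the paper's proof, which simply mirrors the argument for the second case.
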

	\begin{proof}
	We know that 
		\begin{align*}
		R_{p,c}(w_1) = \dfrac{p-1}{p^{p/(p-1)}} + c \text{ and } R_{p,c}(w_2) = -\dfrac{p-1}{p^{p/(p-1)}} + c\text{.}
		\end{align*}

	For the first case, if $c = -\frac{p-1}{p^{p/(p-1)}}$, then $R_{p,c}(w_1) = 0$ and $R'_{p,c}(w_1) = 0$. Thus, $w_1$ is a root of multiplicity 2. We conclude that $a_1 = a_2 = w_1$ and $a_3 > w_2$. Thus, $R_{p,c}(x)$ has two real roots.

	For the second case, if $c = \frac{p-1}{p^{p/(p-1)}}$, then $R_{p,c}(w_2) = 0$ and $R'_{p,c}(w_2) = 0$. Thus, $w_2$ is a root of multiplicity 2. We conclude that $a_1 < w_1$ and $a_2 = a_3 = w_2$. Thus, $R_{p,c}(x)$ has two real roots.
	\end{proof}

Finally, the next lemma tells us how behave the roots of $R_{p,c}$ when $|c| > \frac{p-1}{p^{p/(p-1)}}$.

	\begin{lemma}\label{l2.1.4}
	Let $R_{p,c}(x) = x^p-x+c$ where $x,c \in \mR$ and $p$ is an odd integer greater than 2. Then
	\begin{itemize}
	\item[1.] if $c < -\frac{p-1}{p^{p/(p-1)}}$, then $R_{p,c}(x)$ has exactly one positive root $a_3 > w_2$;
	\item[2.] if $c > \frac{p-1}{p^{p/(p-1)}}$, then $R_{p,c}(x)$ has exactly one negative root $a_1 < w_1$.
	\end{itemize}
	\end{lemma}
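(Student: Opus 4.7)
The plan is to exploit the monotonicity structure of $R_{p,c}$ already established before Lemma \ref{l2.1.1}, namely that $R_{p,c}$ is strictly increasing on $(-\infty,w_1)$, strictly decreasing on $(w_1,w_2)$, and strictly increasing on $(w_2,+\infty)$, with the local max value $R_{p,c}(w_1)=\frac{p-1}{p^{p/(p-1)}}+c$ and local min value $R_{p,c}(w_2)=-\frac{p-1}{p^{p/(p-1)}}+c$. The two cases are perfectly symmetric (in fact related by $x\mapsto -x$, $c\mapsto -c$), so only one direction needs a detailed argument.

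For case 1, I would fix $c<-\frac{p-1}{p^{p/(p-1)}}$ and observe immediately that
\[
R_{p,c}(w_1)=\frac{p-1}{p^{p/(p-1)}}+c<0 \quad\text{and}\quad R_{p,c}(w_2)=-\frac{p-1}{p^{p/(p-1)}}+c<0.
\]
Then I would rule out roots on each monotone piece: on $(-\infty,w_1]$, $R_{p,c}$ increases from $-\infty$ up to the negative value $R_{p,c}(w_1)$, so it is strictly negative there and has no root; on $[w_1,w_2]$, $R_{p,c}$ decreases from $R_{p,c}(w_1)<0$ down to $R_{p,c}(w_2)<0$, so it is again strictly negative with no root; on $[w_2,+\infty)$, $R_{p,c}$ increases from the negative value $R_{p,c}(w_2)$ up to $+\infty$, so by the intermediate value theorem together with strict monotonicity there is exactly one root $a_3$, and it satisfies $a_3>w_2>0$.

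For case 2, the mirror argument works: $c>\frac{p-1}{p^{p/(p-1)}}$ forces both $R_{p,c}(w_1)>0$ and $R_{p,c}(w_2)>0$, so there are no roots on $[w_1,w_2]$ or on $[w_2,+\infty)$, while on $(-\infty,w_1]$ the function increases from $-\infty$ to a positive value and hence contributes exactly one root $a_1<w_1<0$. Alternatively, one can deduce case 2 from case 1 by noting that $R_{p,c}(-x)=-R_{p,-c}(x)$ since $p$ is odd, so the root of $R_{p,c}$ in case 2 is the negative of the root of $R_{p,-c}$ in case 1.

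There is no real obstacle here; the entire statement reduces to the sign pattern of $R_{p,c}$ at the two critical points combined with strict monotonicity on the three intervals. The only thing one has to be a little careful about is confirming that no root can lie exactly at $w_1$ or $w_2$, which is clear because the strict inequalities $R_{p,c}(w_1)<0$ and $R_{p,c}(w_2)<0$ (respectively $>0$ in case 2) are strict in both cases.
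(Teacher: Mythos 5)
Your proposal is correct and follows essentially the same route as the paper: evaluate $R_{p,c}$ at the critical points $w_1,w_2$, note that both values share the sign forced by the hypothesis on $c$, and then use the monotonicity on the three intervals (plus the limits at $\pm\infty$) to conclude there is exactly one root, located on the unbounded monotone piece. The added symmetry remark $R_{p,c}(-x)=-R_{p,-c}(x)$ is a nice shortcut for the second case but does not change the substance of the argument.
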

	\begin{proof}
	For the first case, suppose that $c < -\frac{p-1}{p^{p/(p-1)}}$. We easily verify that $R_{p,c}(w_1) < 0$ and $R_{p,c}(w_2)  < 0$. Since $w_1$ is a local maximum and $R_{p,c}(x)$ is increasing on $(-\infty , w_1)$ and is decreasing on the interval $(w_1,w_2)$, we have that $R_{p,c}(x) < 0$ for all $x \in (-\infty , w_2)$. So, there exists only one positive real number $a_3 \in \left( w_2, +\infty \right)$ such that $R_{p,c}(a_3) = 0$. 

	For the second case, suppose that $c > \frac{p-1}{p^{p/(p-1)}}$. We easily verify that $R_{p,c}(w_1) > 0$ and  $R_{p,c}(w_2)  > 0$. Since $w_2$ is a local minimum and $R_{p,c}(x)$ is decreasing on the interval $(w_1,w_2)$ and increasing on $(w_2, +\infty )$, we have that $R_{p,c}(x) > 0$ for all $x \in (w_1 , +\infty )$. Hence, there exists only one negative real number $a_1 \in \left( -\infty, w_1\right)$ such that $R_{p,c}(a_1) = 0$. 
	\end{proof}

	\subsection{Real intersection of the Multibrot sets}
	We first establish two lemmas. The first one is regarding the symmetries of Multibrot sets (see \cite{Lau} and \cite{Sheng}).
	\begin{lemma}\label{l2.3.1}
	Let $c \in \mMan^p$ with $p\geq 2$ an integer and $c = |c|e^{\bi\theta_c}$. Then, $c_k := |c|e^{\bi \left( \theta_c + \frac{2k\pi}{p-1} \right)}$ is in $\mMan^p$ for any $k\in \mZ$.
	\end{lemma}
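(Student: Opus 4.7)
The plan is to exhibit a simple linear conjugacy. Set $\omega := e^{2\pi \bi/(p-1)}$, so the hypothesis becomes $c_k = \omega^k c$ and in particular $|c_k| = |c|$. I will show that multiplication by $\omega^k$ conjugates $Q_{p,c}$ to $Q_{p,c_k}$, which will force the two orbits of $0$ to have the same modulus termwise.

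First I would verify the conjugacy identity $\phi \circ Q_{p,c} \circ \phi^{-1} = Q_{p,c_k}$ where $\phi(z) := \omega^k z$. The computation relies only on $\omega^{p-1} = 1$:
\[
\phi\bigl(Q_{p,c}(\phi^{-1}(z))\bigr) = \omega^k\bigl((\omega^{-k}z)^p + c\bigr) = \omega^{k(1-p)} z^p + \omega^k c = z^p + c_k = Q_{p,c_k}(z).
\]

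Second, since $\phi(0) = 0$, a straightforward induction on $m$ yields $Q_{p,c_k}^m(0) = \omega^k\, Q_{p,c}^m(0)$ for every $m \in \mN$: the base case $m=1$ gives $Q_{p,c_k}(0) = c_k = \omega^k Q_{p,c}(0)$, and the induction step is $Q_{p,c_k}^{m+1}(0) = Q_{p,c_k}(Q_{p,c_k}^m(0)) = \phi(Q_{p,c}(\phi^{-1}(Q_{p,c_k}^m(0)))) = \phi(Q_{p,c}(Q_{p,c}^m(0))) = \omega^k Q_{p,c}^{m+1}(0)$. Taking moduli, $|Q_{p,c_k}^m(0)| = |Q_{p,c}^m(0)|$ for all $m$, so the boundedness guaranteed by $c \in \mMan^p$ transfers directly to $\{Q_{p,c_k}^m(0)\}$, giving $c_k \in \mMan^p$.

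There is no genuine obstacle here: the entire argument reduces to the single algebraic fact $\omega^{k(p-1)} = 1$, which is exactly the reason Multibrot sets carry $(p-1)$-fold rather than $p$-fold rotational symmetry. One could alternatively phrase this as an instance of the equivalence relation $\sim$ of Definition \ref{def5.2.1} restricted to the complex subspace $\mC(\bo)$, but the direct termwise identity above is the quickest route.
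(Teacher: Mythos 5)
Your proposal is correct and follows essentially the same route as the paper: both establish by induction the termwise identity $Q_{p,c_k}^m(0) = e^{\bi\frac{2k\pi}{p-1}} Q_{p,c}^m(0)$ using only $\left(e^{2\pi\bi/(p-1)}\right)^{p-1}=1$, and conclude from equality of moduli that boundedness transfers. Your preliminary conjugacy check is just a tidy repackaging of the paper's induction step (the paper also invokes Theorem \ref{t2.2.2} for an explicit bound, which, as you note, is not strictly needed).
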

	\begin{proof}
	Suppose $c \in \mMan^p$. We infer by induction that $Q_{p,c_k}^m(0) = e^{\bi\frac{2k\pi}{p-1}} Q_{p,c}^m(0)$ $\forall m \geq 1$. Indeed, let $k \in \mZ$. First, we have that $Q_{p,c_k}(0) = c_k = |c| e^{\bi \left(\theta_c + \frac{2k\pi}{p-1}\right)} = e^{\bi  \frac{2k\pi}{p-1} } c$. Now, suppose that the assumption is true for an integer $m\geq 1$. Then,
		\begin{align*}
		Q_{p,c_k}^{m+1}(0) &= \left( Q_{p,c_k}^m(0) \right) ^p + c_k \\
		& = e^{\bi\left( \frac{2kp\pi}{p-1} \right)} \left(Q_{p,c}^m(0) \right)^p + e^{\bi \frac{2k\pi}{p-1}} c\\
		&= e^{\bi2k\pi}e^{\bi \frac{2k\pi}{p-1} } \left(Q_{p,c}^m(0) \right)^p + e^{\bi \frac{2k\pi}{p-1}} c\\
		&= e^{\bi \frac{2k\pi}{p-1}}  Q_{p,c}^{m+1}(0)\text{.}
		\end{align*}
Moreover, by Theorem \ref{t2.2.2}, $|Q_{p,c}^m(0)| \leq 2^{1/(p-1)}$ $\forall m \in \mN$. Hence, since $|Q_{p,c_k}^m(0)|=|Q_{p,c}^m(0)|$ $\forall m \in \mN$, $c_k \in \mMan^p$.
	\end{proof}
	
	\begin{remark}\label{rem3.4.1}
	We remark that for any $k\in \mZ$, we have $k = n(p-1) + t$ where $n\in \mZ$ and $t \in \oa 0, 1, \ldots , p-2 \fa$. Hence, we see that $e^{\bi \left( \theta_c + \frac{2k\pi}{p-1} \right)} = e^{\bi\left( \theta_c + \frac{2t\pi}{p-1} \right) }$ and there is a cycle of length $p-1$. This shows that there is a group of $p-1$ rotations about the origin attached to the set $\mMan^p$.
	\end{remark}
	The second Lemma is about some properties of the sequence $\Qpit$ when $c > 0$.
	\begin{lemma}\label{lemIncSeqPoly}
	Set $c>0$ where $c$ is a real number. Then, the sequence $\Qpit$ is strictly increasing. Furthermore, if the sequence $\Qpit$ is bounded, then it converges to $c_0>0$.
	\end{lemma}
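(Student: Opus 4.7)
The plan is to prove strict monotonicity by a straightforward induction and then deduce convergence from the monotone convergence theorem.

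For the monotonicity part, I would first observe the base cases: $Q_{p,c}^1(0) = c > 0$ and $Q_{p,c}^2(0) = c^p + c > c = Q_{p,c}^1(0)$, which also shows positivity of the second iterate. The inductive step relies on the fact that the map $x \mapsto x^p$ is strictly increasing on $[0, +\infty)$ for any integer $p \geq 2$ (no odd-degree hypothesis is needed here). Assuming $Q_{p,c}^{m+1}(0) > Q_{p,c}^m(0) > 0$, raising both sides to the $p$-th power preserves the inequality, and adding $c$ then yields $Q_{p,c}^{m+2}(0) > Q_{p,c}^{m+1}(0) > 0$. This simultaneously propagates positivity and strict monotonicity.

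For the convergence part, if $\{Q_{p,c}^m(0)\}_{m=1}^\infty$ is bounded, then being a strictly increasing bounded sequence of real numbers it admits a limit $c_0 \in \mR$ by the monotone convergence theorem. Since $Q_{p,c}^m(0) \geq Q_{p,c}^1(0) = c > 0$ for every $m \geq 1$, passing to the limit gives $c_0 \geq c > 0$.

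I don't expect any real obstacle: the only subtlety is being clear that we need the monotonicity of $x \mapsto x^p$ only on the nonnegative reals, so we carry positivity through the induction alongside the inequality. One could additionally remark (though it is not required by the statement) that $c_0$ must satisfy the fixed-point equation $c_0^p - c_0 + c = 0$, i.e.\ $R_{p,c}(c_0) = 0$, which will presumably be useful in the next subsection when combining this lemma with the root-location results from Lemmas~\ref{l2.1.1}--\ref{l2.1.4} to pin down $\mMan^p \cap \mR$.
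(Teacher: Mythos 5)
Your proposal is correct and follows essentially the same route as the paper: an induction for strict monotonicity using the fact that $x\mapsto x^p$ is increasing on $[0,+\infty)$ (with positivity of the iterates carried along), followed by the monotone convergence theorem, with the bound $c_0 \geq c > 0$ giving positivity of the limit. Your closing remark that $c_0$ satisfies $R_{p,c}(c_0)=0$ is indeed exactly how the lemma is used later in the proof of Theorem \ref{t2.3.1}.
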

	\begin{proof}
	Let $c \in \mR$ such that $c > 0$. Since $c > 0$, we have that $Q_{p,c}^m(0) > 0$ $\forall m \geq 1$. Moreover, it is easy to see that $Q_{p,c}$ is an increasing function on $[0, +\infty )$.
	
	Now, we prove by induction that $\Qpit$ is strictly increasing. For the case $m = 1$, we have
		\begin{align*}
		Q_{p,c} (0) = c < c^p + c = Q_{p,c}^2 (0)
		\end{align*}
	since $c > 0$. Suppose that the assumption is true for an integer $k > 0$, that is $Q_{p,c}^k (0) < Q_{p, c}^{k + 1} (0)$. Then,
		\begin{align*}
		Q_{p, c}^{k+2} (0) = (Q_{p,c}^{k + 1}(0))^p + c > (Q_{p,c}^{k}(0))^p + c
		\end{align*}
	by the induction hypothesis and the fact that $Q_{p,c}$ is an increasing function on $[0, +\infty )$. So, $Q_{p,c}^{k + 2} (0) > Q_{p,c}^{k+1} (0)$. Thus, $Q_{p,c}^{m+1} (0) > Q_{p,c}^m (0)$ $\forall m \geq 1$.
	
	If the sequence $\Qpit$ is bounded, then since it is an increasing sequence, it converges to a limit $c_0 > 0$.
	\end{proof}

We can state the following theorem concerning the set $\mR \cap \mManp$ for odd integers greater than two. Fig. \ref{figureMultibrots} is an illustration of some Multibrots. 
	\begin{theorem}\label{t2.3.1}
	Let $p$ be an odd integer greater than 2. The set $\mMan^p$ crosses the real axis on the interval
$\left[- \frac{(p-1)}{p^{p/(p-1)}},\, \frac{(p-1)}{p^{p/(p-1)}}\right]$.
	\end{theorem}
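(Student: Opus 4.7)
The strategy is to characterise membership in $\mMan^p \cap \mR$ via real fixed points of $Q_{p,c}$, which are exactly the real zeros of $R_{p,c}(x) = x^p - x + c$ studied in Lemmas \ref{l2.1.1}--\ref{l2.1.4}. Since $p$ is odd, $Q_{p,c}'(x) = px^{p-1} \geq 0$, so $Q_{p,c}$ is non-decreasing on all of $\mR$, which lets me sandwich orbits between fixed points. I would split into three cases: $c \in \bigl[0,\tfrac{p-1}{p^{p/(p-1)}}\bigr]$, $c > \tfrac{p-1}{p^{p/(p-1)}}$, and $c<0$ (by symmetry).

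For $c \in \bigl[0,\tfrac{p-1}{p^{p/(p-1)}}\bigr]$, Lemmas \ref{l2.1.1}--\ref{l2.1.3} give a positive real fixed point $a_3 \in [w_2, 1]$ of $Q_{p,c}$, with $a_3 = 1$ when $c = 0$ and $a_3 = w_2$ when $c$ is at the right endpoint. A direct computation yields $c = a_3 - a_3^p = a_3(1 - a_3^{p-1}) \leq a_3$ since $a_3 \leq 1$. I then prove by induction on $m$ that $0 \leq Q_{p,c}^m(0) \leq a_3$: the base case $Q_{p,c}(0) = c \in [0, a_3]$ is the previous inequality, and the inductive step uses the monotonicity of $Q_{p,c}$ on $\mR$ together with $Q_{p,c}(a_3) = a_3$. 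Hence $\Qpit$ is bounded and $c \in \mMan^p$.

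For $c > \tfrac{p-1}{p^{p/(p-1)}}$, I argue by contradiction. Lemma \ref{lemIncSeqPoly} gives that $\Qpit$ is strictly increasing; if it were bounded, it would converge to some $c_0 > 0$, and passing to the limit in the recursion $Q_{p,c}^{m+1}(0) = (Q_{p,c}^m(0))^p + c$ via continuity would force $c_0^p - c_0 + c = 0$, i.e., $c_0$ would be a positive real zero of $R_{p,c}$. But Lemma \ref{l2.1.4} asserts that for such $c$ the polynomial $R_{p,c}$ has exactly one real root and that root is $a_1 < w_1 < 0$, a contradiction. Thus the orbit is unbounded and $c \notin \mMan^p$.

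Finally, for negative $c$ I invoke the symmetry Lemma \ref{l2.3.1}: since $p - 1$ is even, choosing $k = (p-1)/2$ gives $e^{\bi 2k\pi/(p-1)} = -1$, so membership in $\mMan^p$ is symmetric about $0$ on the real axis. Combining the three pieces yields the advertised interval. The main subtleties lie in the boundary case $c = \tfrac{p-1}{p^{p/(p-1)}}$, where $a_3$ coincides with the critical point $w_2$ (the induction still closes because $w_2$ is genuinely a fixed point there), and in the limit-to-fixed-point step of the third case, which is routine but relies crucially on the strict monotonicity from Lemma \ref{lemIncSeqPoly}.
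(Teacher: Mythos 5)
Your proposal is correct and follows essentially the same route as the paper: symmetry via Lemma \ref{l2.3.1} with $k=\tfrac{p-1}{2}$ to reduce to $c\geq 0$, the fixed-point/limit contradiction with Lemma \ref{lemIncSeqPoly} and Lemma \ref{l2.1.4} for $c>\tfrac{p-1}{p^{p/(p-1)}}$, and an induction bounding the orbit by the positive root $a_3\in[w_2,1]$ for $0\leq c\leq\tfrac{p-1}{p^{p/(p-1)}}$. The only cosmetic difference is that you close the induction using the monotonicity of $Q_{p,c}$ on $\mR$ (valid since $p$ is odd) and $Q_{p,c}(a_3)=a_3$, whereas the paper uses the triangle inequality together with $a_3^p+c=a_3$; both are valid.
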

	\begin{proof}
	By the Lemma \ref{l2.3.1} and the Remark \ref{rem3.4.1}, if we let $t = \frac{p-1}{2}$, we see that if $c \in \mMan^p$, then $-c \in \mMan^p$. So,  we can restrict our proof to the interval $\left[ 0,\, \frac{p-1}{p^{p/(p-1)}} \right]$. Let $R_{p,c}(x)=x^p-x+c$ where $x,c \in \mR$. We start by showing that no point $c>\frac{(p-1)}{p^{p/(p-1)}}$ lies in $\mMan^p$. In this case, from Lemma \ref{l2.1.4}, $R_{p,c}$ has exactly one real root $a_1 < 0$ . Suppose that $c \in \mMan^p$, \textit{i.e.} $\oa Q_{p,c}^m(0)\fa_{m=1}^{\infty}$ is bounded. Then, Lemma 3 in \cite{RochonParise} implies that $\oa Q_{p,c}^m(0)\fa_{m=1}^{\infty}$ is strictly increasing and it converges to $c_0>0$. Since $Q_{p,c}(0)$ is a polynomial function, we have that
		\begin{align}
		c_0=\lim_{m \rightarrow \infty} Q_{p,c}^{m+1}(0)= Q_{p,c}\left( \lim_{m \rightarrow \infty} Q_{p,c}^{m}(0) \right)= Q_{p,c}(c_0)\text{.}
		\end{align}
Thus, $c_0$ is a real root of $R_{p,c}$ and from Lemma \ref{l2.1.4} we have $c_0=a_1$. However, since $c_0 > 0$, this is a contradiction with $a_1 < 0$. Thus, $c \not \in \mMan^p$.

	Next, we show that for $0 \leq c \leq \frac{p-1}{p^{p/(p-1)}}$, we have $c\in \mMan^p$. Obviously, $c=0$ is in $\mMan^p$. Suppose that $0 < c \leq \frac{p-1}{p^{p/(p-1)}}$. In this case, from Lemmas \ref{l2.1.2} and \ref{l2.1.3}, $R_{p,c}(x)$ has a real positive root $a_3 \in \left[ w_2 , 1\right)$. We prove by induction that $|Q_{p,c}^m(0)|<a_3$ $\forall m \in \mN$. For $m=1$, we have that $|Q_{p,c}(0)|=|c|<a_3$ because $|c| < w_2 \leq a_3$. Now, suppose that $|Q_{p,c}^k(0)| < a_3$ for a $k \in \mN$. Then, since $R_{p,c}(a_3)=a_3^p-a_3+c=0$ and $c>0$,
		\begin{equation*}
		|Q_{p,c}^{k+1}(0)|=|(Q_{p,c}^k(0))^p+c|\leq |(Q_{p,c}^k(0))|^p+|c| < a_3^p+c=a_3.
		\end{equation*}
Thus, the proposition is true for $k+1$ and $|Q_{p,c}^m(0)|<a_3$ $\forall m \in \mN$. Since $a_3 < 1 < 2^{1/(p-1)}$, then by the Theorem \ref{t2.2.2} we have $c \in \mMan^p$ .

	In conclusion, $\mMan^p \cap \mR_+ = \left[ 0, \, \frac{(p-1)}{p^{p/(p-1)}} \right]$.
	\end{proof}

\begin{figure}
\centering
\subfigure[$\mMan^3$]{
	\includegraphics[scale = 0.1]{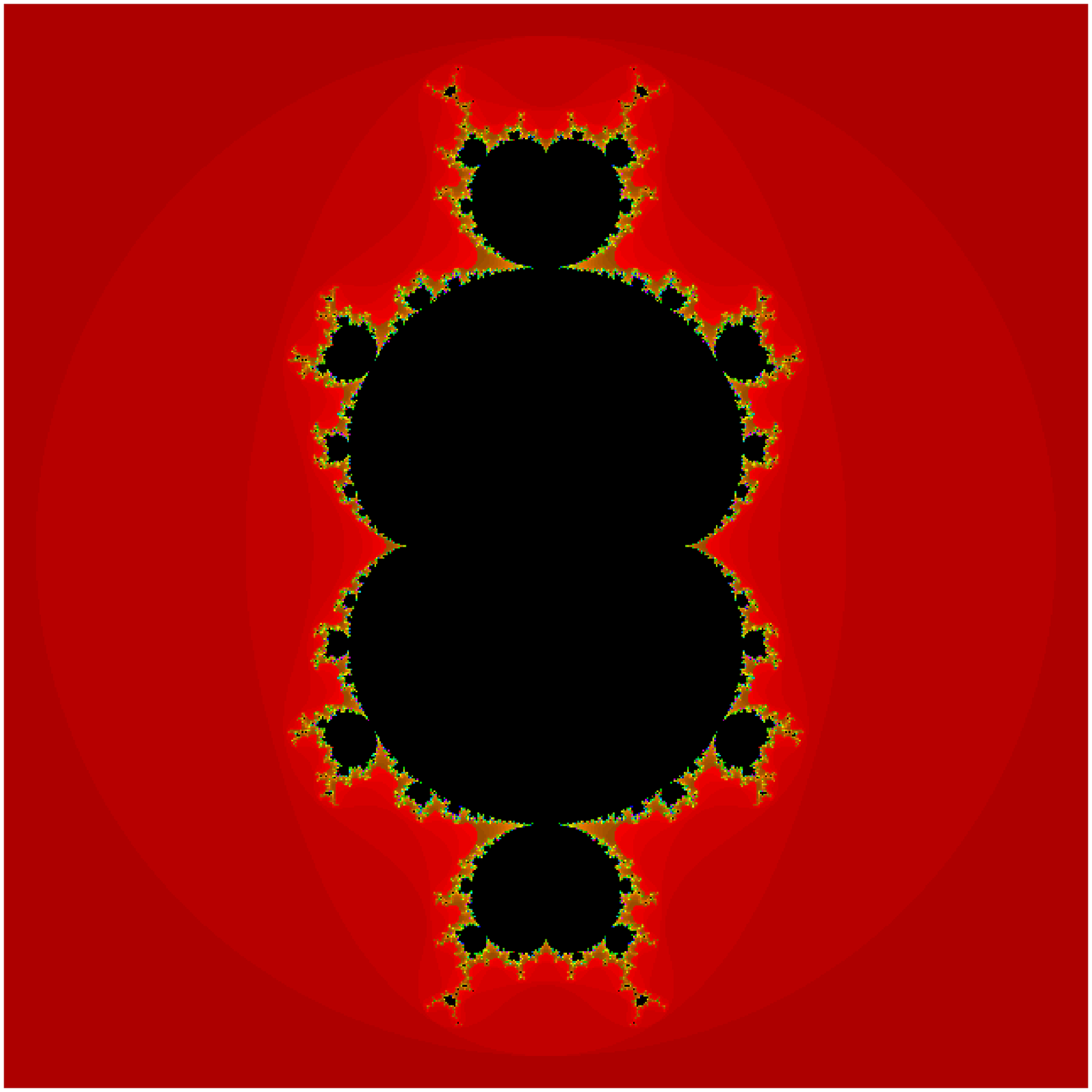}}
\subfigure[$\mMan^5$]{
	\includegraphics[scale = 0.1]{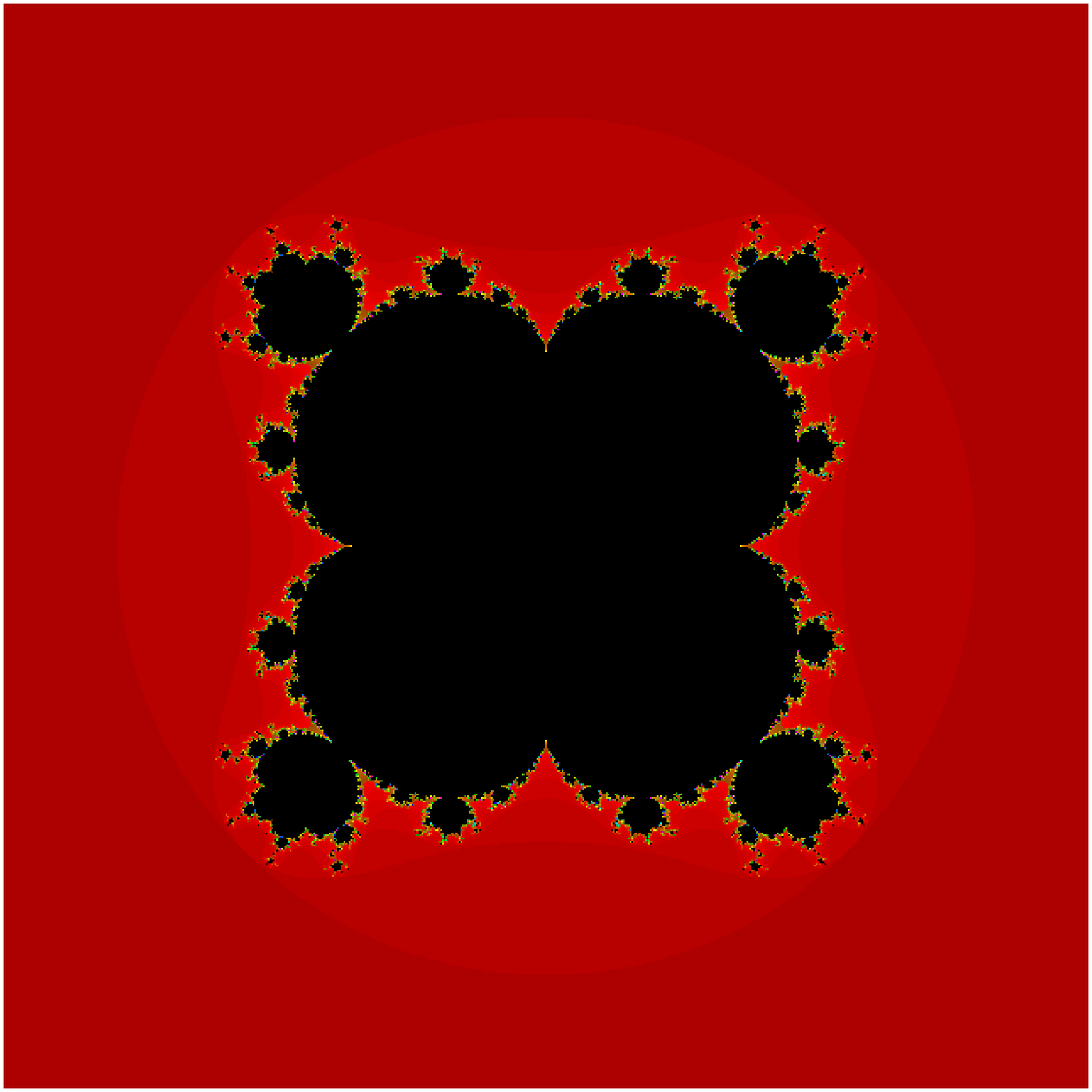}}
\subfigure[$\mMan^7$]{
	\includegraphics[scale = 0.1]{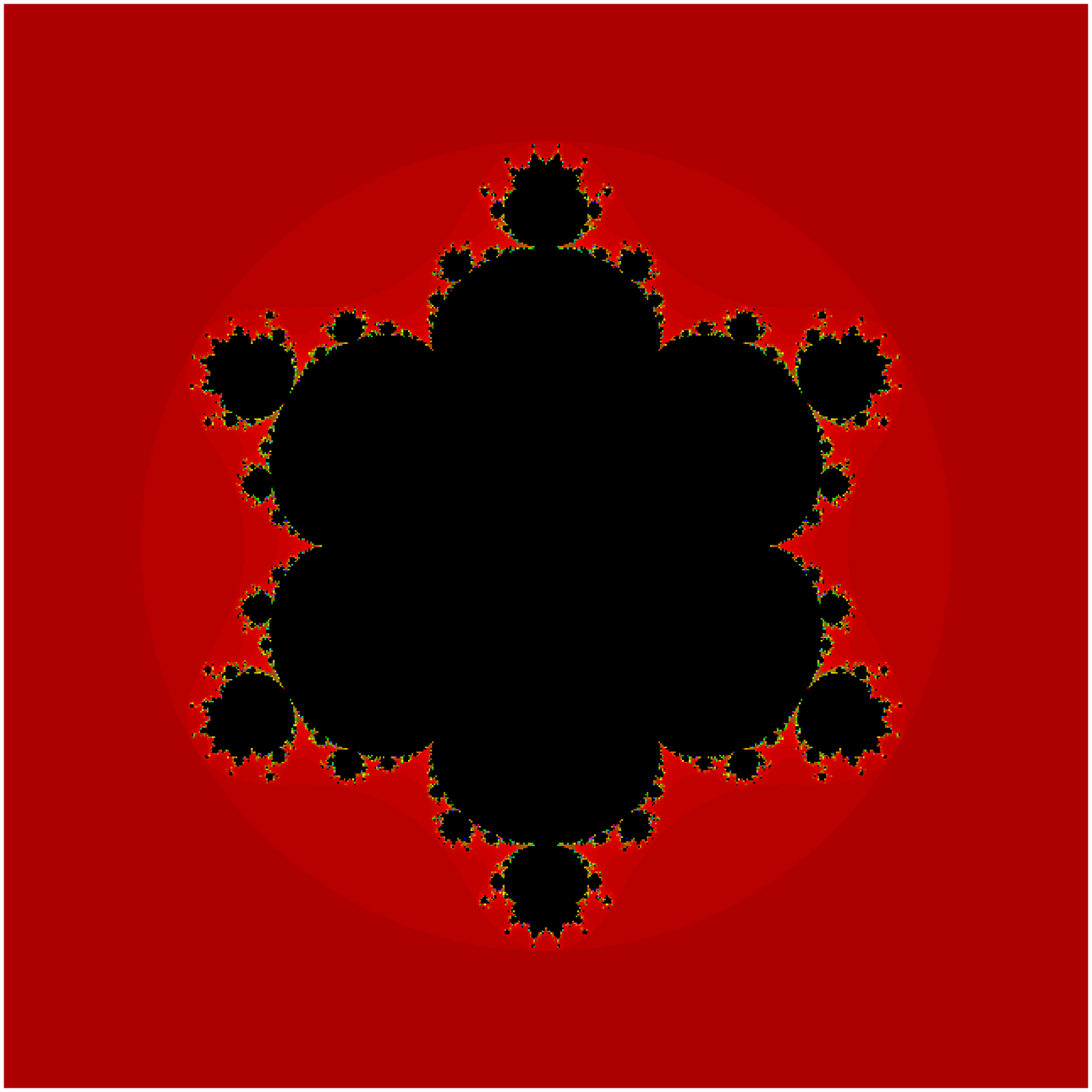}}
\subfigure[$\mMan^9$]{
	\includegraphics[scale = 0.1]{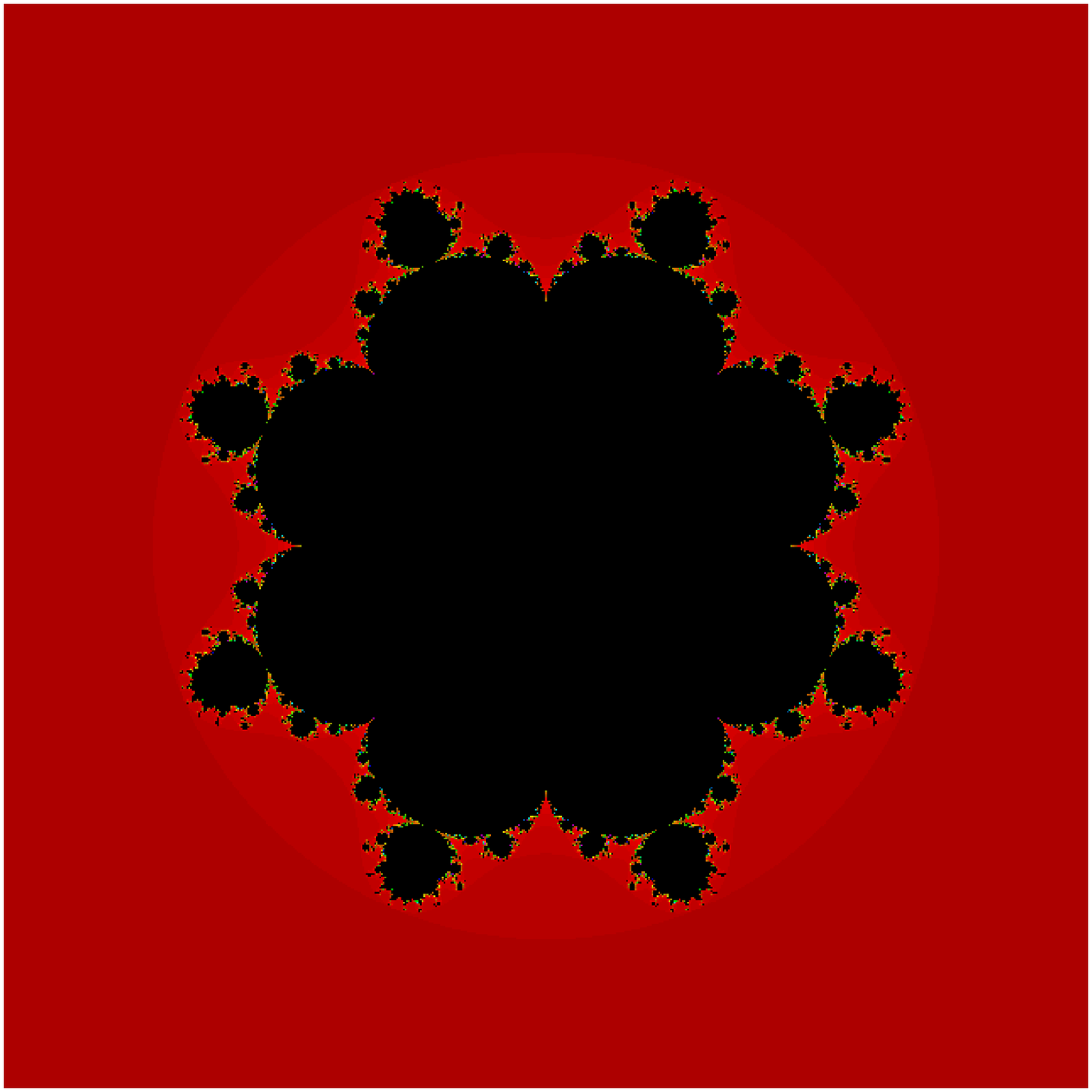}}
\caption{Multibrots for several odd integers, $-1.5 \leq \pre (c), \pim (c) \leq 1.5$}\label{figureMultibrots}
\end{figure}
	
	\subsection{Characterization of the Hyperbrots}\label{sec3.4}
	In 1990, Senn \cite{Senn} generated the Mandelbrot set using the hyperbolic numbers. Instead of obtaining a fractal structure, the set obtained seemed to be a square. Four years later, in \cite{MET}, Metzler proved that $\mHt$ is precisely a square with diagonal length 2$\frac{1}{4}$ and of side length $\frac{9}{8} \sqrt{2}$. It was also proved in \cite{RochonParise} that $\mHyb^3$ is a square with diagonal length $\frac{4}{3\sqrt{3}}$ and with side length $\frac{2}{3\sqrt{3}}\sqrt{2}$. In this subsection, we generalized their results for odd integers greater than two.
	
	Before proving the next theorem, we introduce some notations that the reader can find in \cite{RochonParise}. Let $(u,v)^{\top}, (x,y)^{\top} \in \mR^2$. We define two multiplication operations $\diamond$ and $\ast$ on $\mR^2$ as
	\begin{equation}\label{e3.1}
	\begin{pmatrix}
	u\\v
	\end{pmatrix} \diamond
	\begin{pmatrix}
	x\\y
	\end{pmatrix}:=
	\begin{pmatrix}
	ux+vy\\vx+uy
	\end{pmatrix}
	\end{equation}
	\begin{equation}\label{e3.2}
	\begin{pmatrix}
	u\\v
	\end{pmatrix} \ast
	\begin{pmatrix}
	x\\y
	\end{pmatrix}:=
	\begin{pmatrix}
	ux\\vy
	\end{pmatrix}\text{.}
	\end{equation}
It is easy to see that $(\mR^2 , + , \diamond )$, and $(\mR^2 , + , \ast )$ are commutative rings with zero divisors. Moreover, the multiplication $\diamond$ is exactly the same multiplication defined on $\mD$. So, in fact, we have $(\mR^2, +, \diamond ) \simeq (\mD, + , \cdot )$. If we set
	\begin{align*}
	T := \begin{pmatrix} 1 & -1 \\ 1 & 1 \end{pmatrix} 
	\end{align*}
then it is easy to see that $T$ is an isomorphism between $(\mR^2 , + , \diamond )$ and $(\mR^2 , +, \ast )$. Now, define
	\begin{align}
	\bHpc \begin{pmatrix}
	x \\ y
	\end{pmatrix}= \begin{pmatrix}
	x \\ y
	\end{pmatrix} \diamond_{\circ p} \begin{pmatrix}
	x \\ y
	\end{pmatrix} + \begin{pmatrix}
	a \\ b
	\end{pmatrix}
	\end{align}
where $\diamond_{\circ p}$ denotes the $p$-th composition of the operation $\diamond$ and $c = a + b \bj$. Then, we obtain the following property.
	\begin{lemma}\label{lemHyperIterationRealCompo}
	For all $m \in \mN$ such that $m \geq 1$, we have
		\begin{align*}
		T \bHpc^m \begin{pmatrix} x \\ y \end{pmatrix} = \begin{pmatrix}
		Q_{p,a - b}^m (x - y) \\ Q_{p,a + b}^m (x+y)
		\end{pmatrix} \text{.}
		\end{align*}
	\end{lemma}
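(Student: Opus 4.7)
\emph{Plan of proof.} The argument is a straightforward induction on $m$, with the entire conceptual content concentrated in the base case $m=1$. The key idea is to exploit the ring isomorphism $T:(\mR^2,+,\diamond)\to(\mR^2,+,\ast)$: since the operation $\ast$ is just componentwise multiplication, conjugating the $\diamond$-iteration by $T$ decouples it into two independent real polynomial iterations, one driven by $a-b$ and one by $a+b$.

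\emph{Base case.} First I would verify that, for any $w=(x,y)^\top\in\mR^2$, one has $T(w\diamond w)=T(w)\ast T(w)$; this is just the isomorphism property of $T$ between $(\mR^2,+,\diamond)$ and $(\mR^2,+,\ast)$ recorded in the paper. Iterating gives $T(w\diamond_{\circ p}w)=T(w)\ast_{\circ p}T(w)$, and since $T(w)=(x-y,x+y)^\top$, componentwise exponentiation yields
\begin{equation*}
T\bigl(w\diamond_{\circ p}w\bigr)=\begin{pmatrix}(x-y)^p\\(x+y)^p\end{pmatrix}.
\end{equation*}
Combining this with $T(a,b)^\top=(a-b,a+b)^\top$ and the linearity of $T$, I obtain
\begin{equation*}
T\bHpc\VEC{x}{y}=\begin{pmatrix}(x-y)^p+(a-b)\\(x+y)^p+(a+b)\end{pmatrix}=\begin{pmatrix}Q_{p,a-b}(x-y)\\Q_{p,a+b}(x+y)\end{pmatrix},
\end{equation*}
which is the claim for $m=1$.

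\emph{Inductive step.} Assume the identity holds for some $m\geq1$, and write $(u,v)^\top:=\bHpc^m(x,y)^\top$. By the induction hypothesis, $T(u,v)^\top=(Q_{p,a-b}^m(x-y),Q_{p,a+b}^m(x+y))^\top$, so $u-v=Q_{p,a-b}^m(x-y)$ and $u+v=Q_{p,a+b}^m(x+y)$. Applying the base case to $(u,v)^\top$ gives
\begin{equation*}
T\bHpc^{m+1}\VEC{x}{y}=T\bHpc\VEC{u}{v}=\begin{pmatrix}Q_{p,a-b}(u-v)\\Q_{p,a+b}(u+v)\end{pmatrix}=\begin{pmatrix}Q_{p,a-b}^{m+1}(x-y)\\Q_{p,a+b}^{m+1}(x+y)\end{pmatrix},
\end{equation*}
completing the induction.

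\emph{Main obstacle.} There is essentially no hard step; the only thing requiring care is the base case, and specifically the verification that $T$ intertwines $\diamond$ with $\ast$ in the form $T(w\diamond w)=T(w)\ast T(w)$, so that the $p$-fold $\diamond$-power translates to componentwise $p$-th powers after applying $T$. Once that is in hand, the inductive step is purely mechanical substitution, and no use of the degree parity or of the preceding root-locus lemmas is required.
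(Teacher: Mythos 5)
Your proposal is correct and follows essentially the same route as the paper: induction on $m$, with the base case resting on the fact that $T$ intertwines $\diamond$ with the componentwise product $\ast$, so that the $p$-fold $\diamond$-power becomes componentwise $p$-th powers after applying $T$. The only cosmetic difference is that in the inductive step you reuse the base-case formula at the point $(u,v)^\top=\bHpc^m(x,y)^\top$ rather than re-applying the isomorphism identity directly, which is the same computation organized slightly differently.
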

	\begin{proof}
	We prove the statement by mathematical induction on $m$.
	For $m = 1$, we have
		\begin{align*}
		T \bHpc \begin{pmatrix} x \\ y \end{pmatrix} &= T \op \VEC{x}{y} \diamond_{\circ_{p}} \VEC{x}{y}  + \VEC{a}{b} \fp\\
		&= T \VEC{x}{y} \ast_{\circ_p} T \VEC{x}{y} + T \VEC{a}{b} \\
		&= \VEC{x - y}{x + y} \ast_{\circ_p} \VEC{x - y}{x+y} + \VEC{a - b}{a+b}\text{.}
		\end{align*}
	Moreover, from the definition of $\ast$, we obtain
		\begin{align*}
		T \bHpc \begin{pmatrix} x \\ y \end{pmatrix} &= \VEC{(x-y)^p}{(x+y)^p} + \VEC{a-b}{a+b} \\
		&= \VEC{Q_{p,a-b}(x-y)}{Q_{p,a+b}(x+y)} \text{.}
		\end{align*}
	For the general case, assume that the assumption is true for an integer $k \geq 1$. Then,
		\begin{align*}
		T \bHpc^{k+1} \begin{pmatrix} x \\ y \end{pmatrix} &= T \op \bHpc^{k} \VEC{x}{y} \diamond_{\circ_p} \bHpc^{k} \VEC{x}{y} + \VEC{a}{b} \fp \\
		&= T \bHpc^{k} \VEC{x}{y} \ast_{\circ_{p}} T \bHpc^{k} \VEC{x}{y} + T \VEC{a}{b} \text{.}
		\end{align*}
	From the induction hypothesis, we have
		\begin{align*}
		T \bHpc^{k+1} \begin{pmatrix} x \\ y \end{pmatrix} &= \VEC{Q_{p,a-b}^k(x-y)}{Q_{p,a+b}^k(x+y)} \ast_{\circ_{p}} \VEC{Q_{p,a-b}^k(x-y)}{Q_{p,a+b}^k (x + y)} + \VEC{a- b}{a+b}\\
		&= \VEC{(Q_{p,a-b}^k(x-y))^p}{(Q_{p,a+b}^k(x+y))^p} + \VEC{a-b}{a+b} \\
		&= \VEC{Q_{p,a-b}^{k+1}(x-y)}{Q_{p,a+b}^{k+1} (x+y)} \text{.}
		\end{align*}
		Hence, by the induction principle \begin{align*}
		T \bHpc^m \begin{pmatrix} x \\ y \end{pmatrix} = \begin{pmatrix}
		Q_{p,a - b}^m (x - y) \\ Q_{p,a + b}^m (x+y)
		\end{pmatrix}
		\end{align*}
		for all $m \in \mN^{*}$.

	\end{proof}
	
	We can now prove the following theorem.
	\begin{theorem}\label{t3.2}
		Let $p>2$ be an odd integer. Then, the Hyperbrot of order $p$ is characterized as
	\begin{equation*}
	\mHyb^p = \oa x+y\bj \in \mD \, : \, |x|+|y| \leq \frac{p-1}{p^{p/(p-1)}} \fa\text{.}
	\end{equation*}
	\end{theorem}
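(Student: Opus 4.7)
The plan is to reduce the hyperbolic iteration to two independent real iterations via the isomorphism $T$, then apply Theorem \ref{t2.3.1} componentwise.

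First I would note that under the identification of the hyperbolic number $c = x + y \bj$ with the vector $(x,y)^{\top} \in \mR^2$, the iteration defining $\mHyb^p$ becomes the map $\bHpc$. In particular, $c \in \mHyb^p$ iff $\{\bHpc^m(0)\}_{m=1}^{\infty}$ is bounded in $\mR^2$ (with the Euclidean norm, which matches the hyperbolic norm). Since $T$ is a linear isomorphism of $\mR^2$, this is equivalent to the boundedness of $\{T\bHpc^m(0)\}_{m=1}^{\infty}$.

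Next, I would apply Lemma \ref{lemHyperIterationRealCompo} with initial point $(0,0)^{\top}$, which gives
\begin{equation*}
T \bHpc^m \begin{pmatrix} 0 \\ 0 \end{pmatrix} = \begin{pmatrix} Q_{p,x-y}^m(0) \\ Q_{p,x+y}^m(0) \end{pmatrix}
\end{equation*}
for every $m \geq 1$. The boundedness of this $\mR^2$-valued sequence is equivalent to the simultaneous boundedness of the two real sequences $\{Q_{p,x-y}^m(0)\}$ and $\{Q_{p,x+y}^m(0)\}$, i.e. to the conditions $x - y \in \mMan^p \cap \mR$ and $x + y \in \mMan^p \cap \mR$. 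By Theorem \ref{t2.3.1}, this is equivalent to
\begin{equation*}
|x - y| \leq \frac{p-1}{p^{p/(p-1)}} \quad \text{and} \quad |x + y| \leq \frac{p-1}{p^{p/(p-1)}}.
\end{equation*}

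Finally I would use the elementary identity $\max(|x+y|, |x-y|) = |x| + |y|$ to rewrite this pair of inequalities as the single inequality $|x| + |y| \leq \frac{p-1}{p^{p/(p-1)}}$, which is exactly the claimed description of $\mHyb^p$. I do not anticipate a serious obstacle here: the heavy lifting has already been done in Theorem \ref{t2.3.1} (the exact real interval of $\mMan^p$) and in Lemma \ref{lemHyperIterationRealCompo} (the diagonalization of the hyperbolic iteration via $T$); the present argument is a clean assembly of these two facts plus the $\ell^1$--$\ell^\infty$ identity on $\mR^2$, yielding a square with vertices at $\pm\frac{p-1}{p^{p/(p-1)}}$ on the real and hyperbolic axes.
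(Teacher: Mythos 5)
Your proposal is correct and follows essentially the same route as the paper: diagonalize the hyperbolic iteration via $T$ (the paper invokes Lemma 7 of \cite{RochonParise}, you invoke Lemma \ref{lemHyperIterationRealCompo}, which serves the same purpose), apply Theorem \ref{t2.3.1} to each real component, and convert the pair of inequalities on $|x\pm y|$ into $|x|+|y|\leq \frac{p-1}{p^{p/(p-1)}}$. Your explicit use of the identity $\max(|x+y|,|x-y|)=|x|+|y|$ simply makes precise the ``simple computation'' step in the paper's proof.
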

	\begin{proof}
	By Lemma 7 in \cite{RochonParise} and the remark right after, $\oa \bHpc^m (\mathbf{0}) \fa _{m=1}^{\infty}$ is bounded  iff the real sequences $\oa Q_{p,x-y}^m(0) \fa _{m=1}^{\infty}$ and $\oa Q_{p,x+y}^m(0) \fa _{m=1}^{\infty}$ are bounded. However, according to Theorem \ref{t2.3.1}, these sequences are bounded iff
		\begin{align}
		|x-y| \leq \frac{p-1}{p^{p/(p-1)}} \mbox{ and } |x+y|\leq \frac{p-1}{p^{p/(p-1)}} \label{e3.7}\text{.}
		\end{align}
	Then, by a simple computation we obtain that $|x|+|y| \leq \frac{p-1}{p^{p/(p-1)}}$. Conversely, if $|x|+|y| \leq \frac{p-1}{p^{p/(p-1)}}$ is true, then by the properties of the absolute value, we obtain directly the inequalities in \eqref{e3.7}. Thus, we obtain the desired characterization for $\mHyb^p$.
	\end{proof}
	
We see that the images in figure \ref{figureHyperbrots} represent faithfully Theorem \ref{t3.2}. Moreover, we remark that the squares are growing as $p$ tends to infinity. Does this process stop or goes \textit{ad infinitum}?
	
\begin{figure}
\centering
\subfigure[$\mHyb^3$]{
	\includegraphics[scale = 0.068]{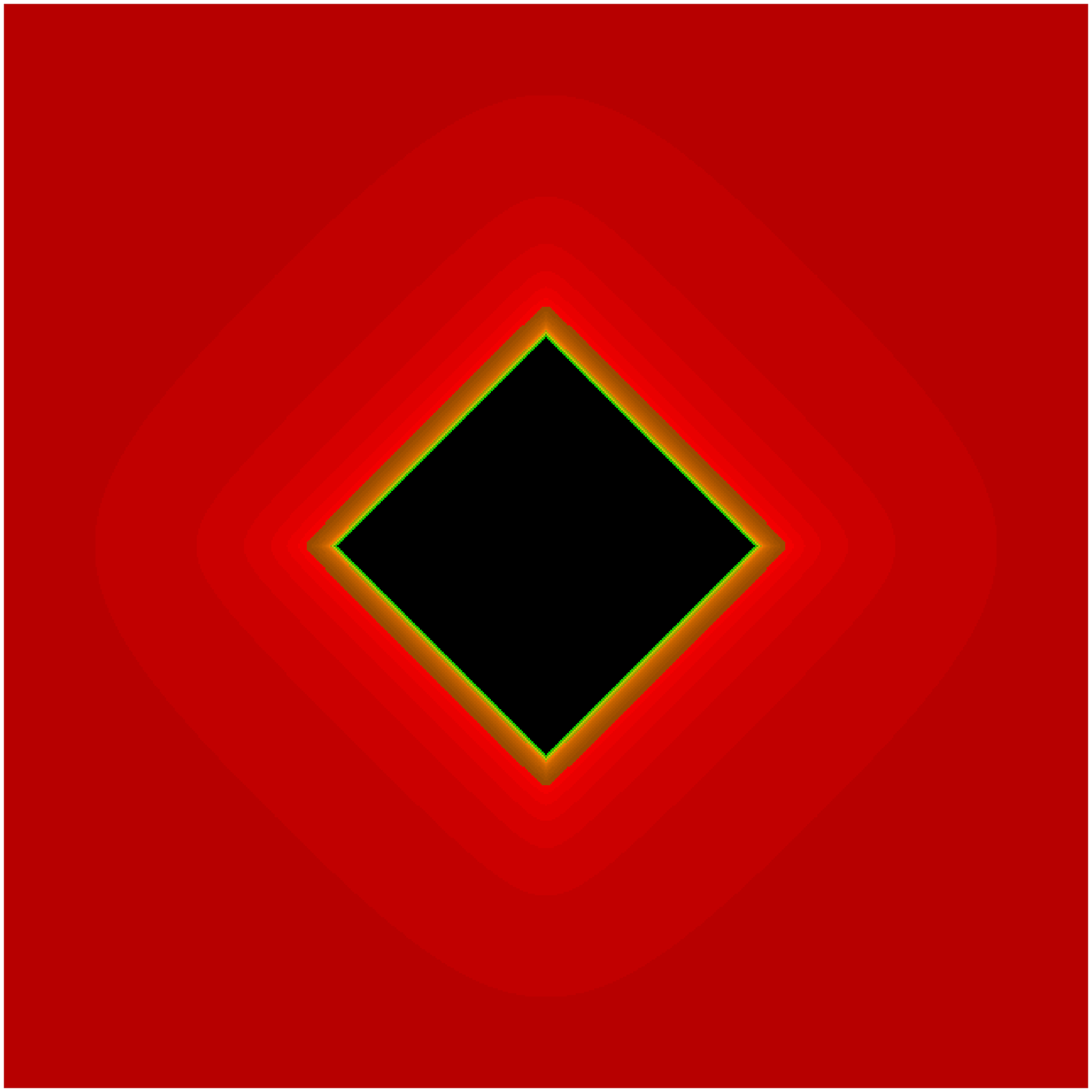}}
\subfigure[$\mHyb^5$]{
	\includegraphics[scale = 0.068]{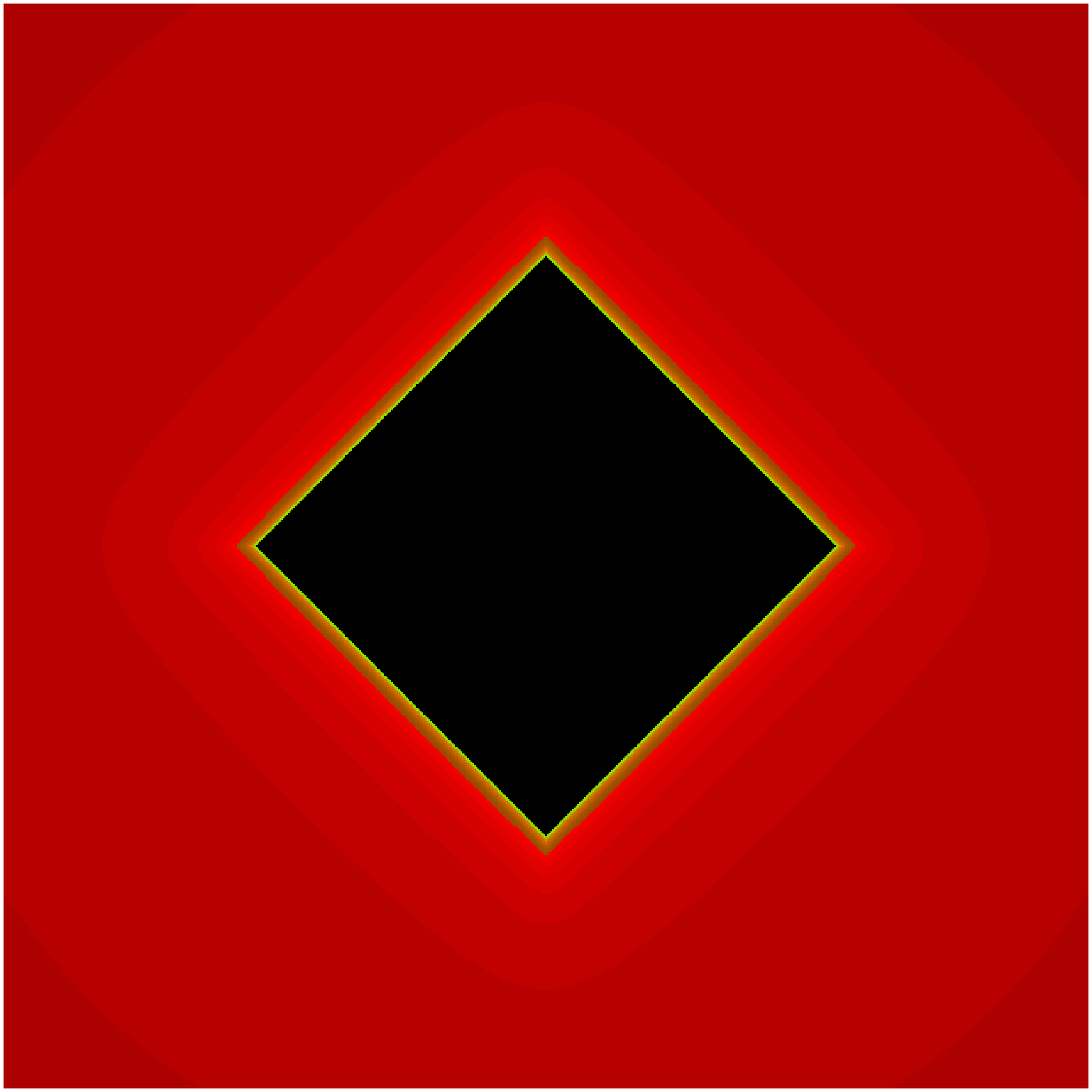}}
\subfigure[$\mHyb^7$]{
	\includegraphics[scale = 0.068]{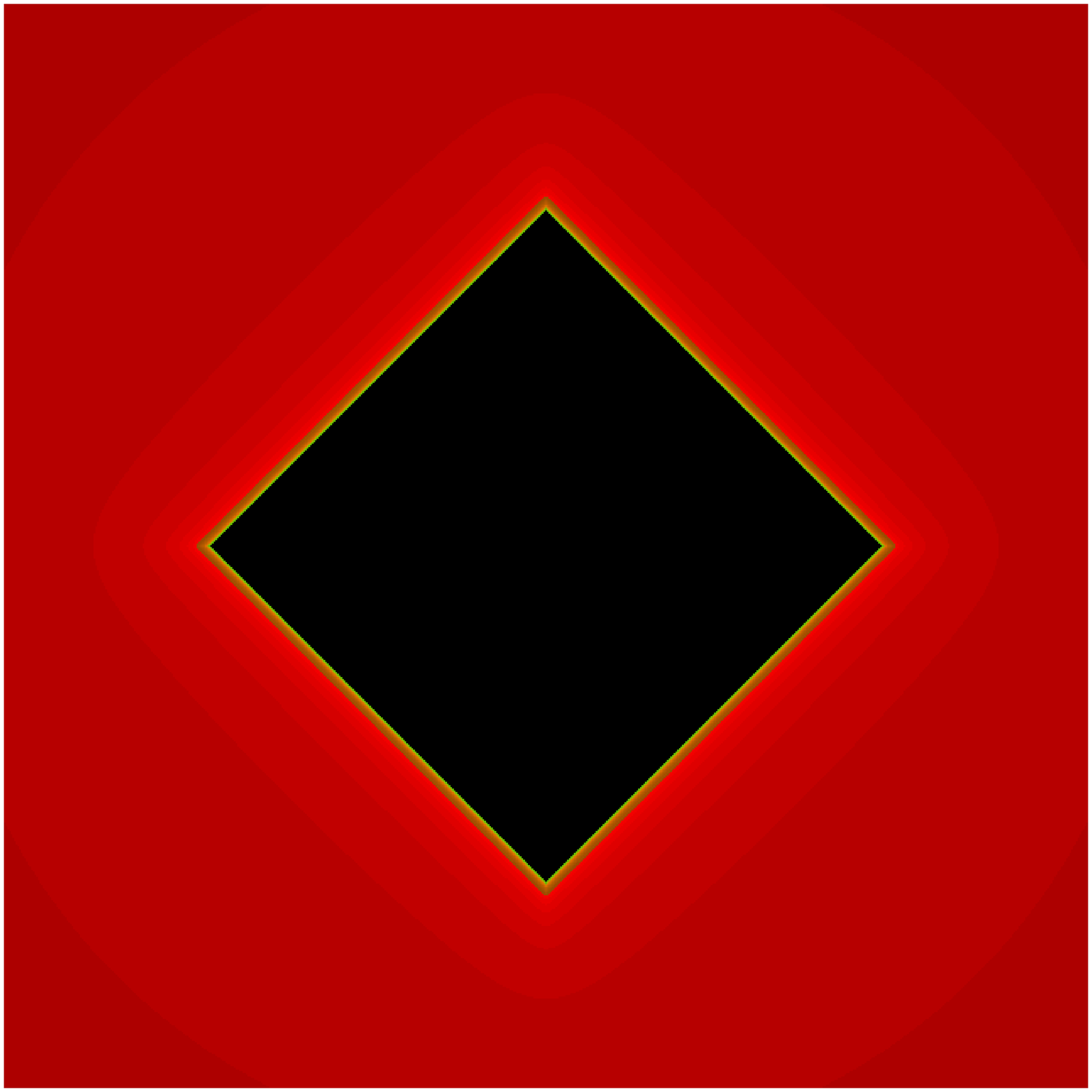}}
\subfigure[$\mHyb^9$]{
	\includegraphics[scale = 0.068]{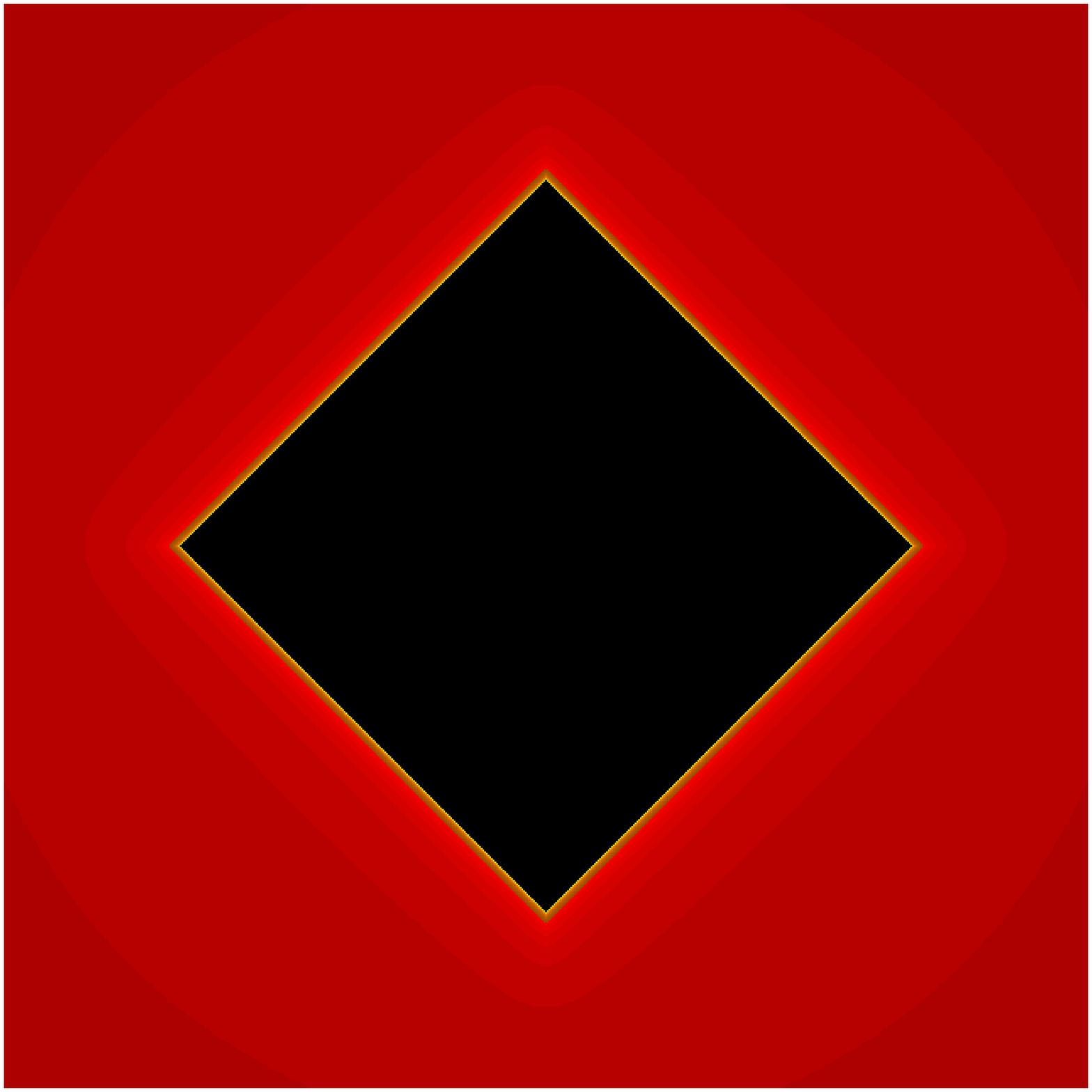}}
\subfigure[$\mHyb^{11}$]{
	\includegraphics[scale = 0.068]{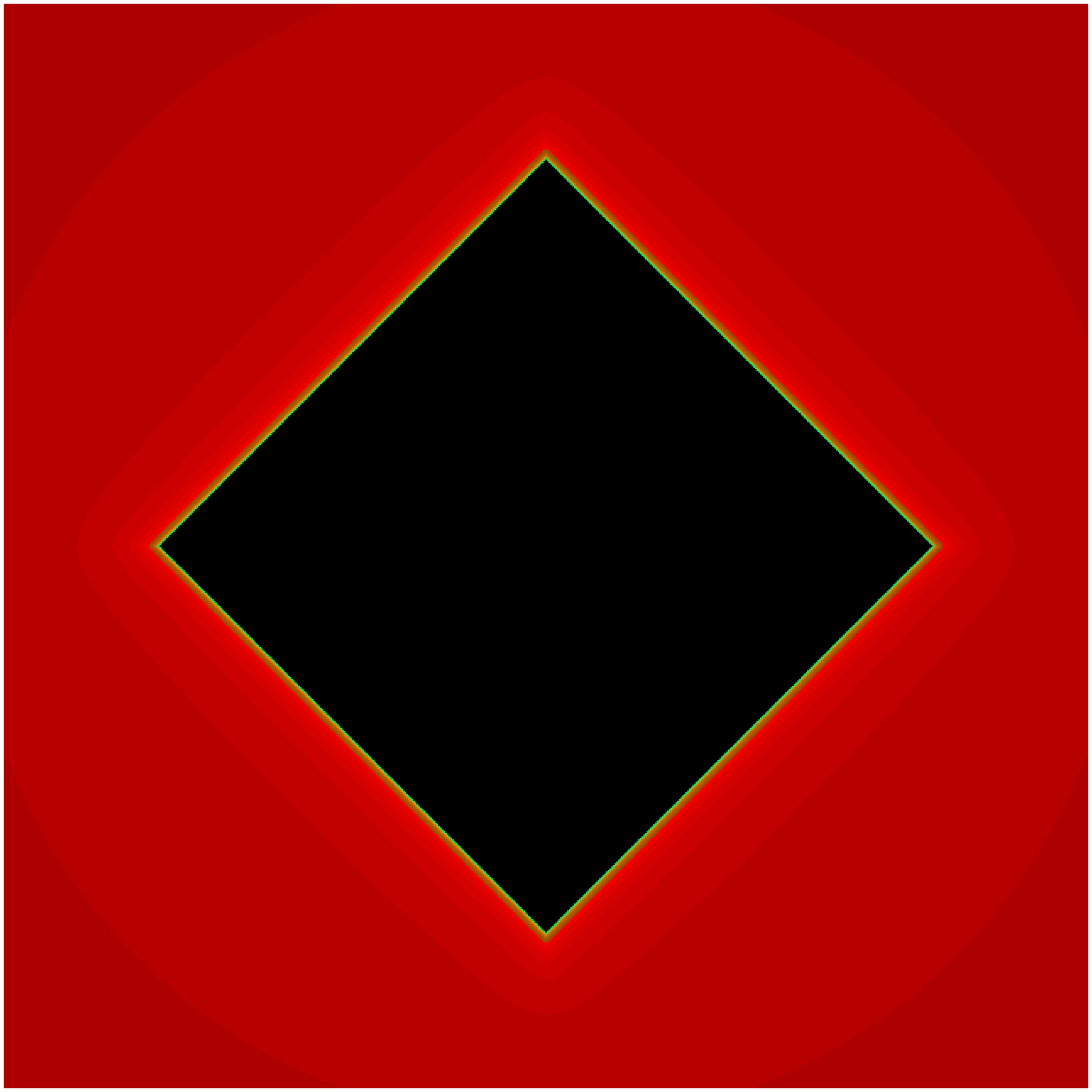}}
\subfigure[$\mHyb^{13}$]{
	\includegraphics[scale = 0.068]{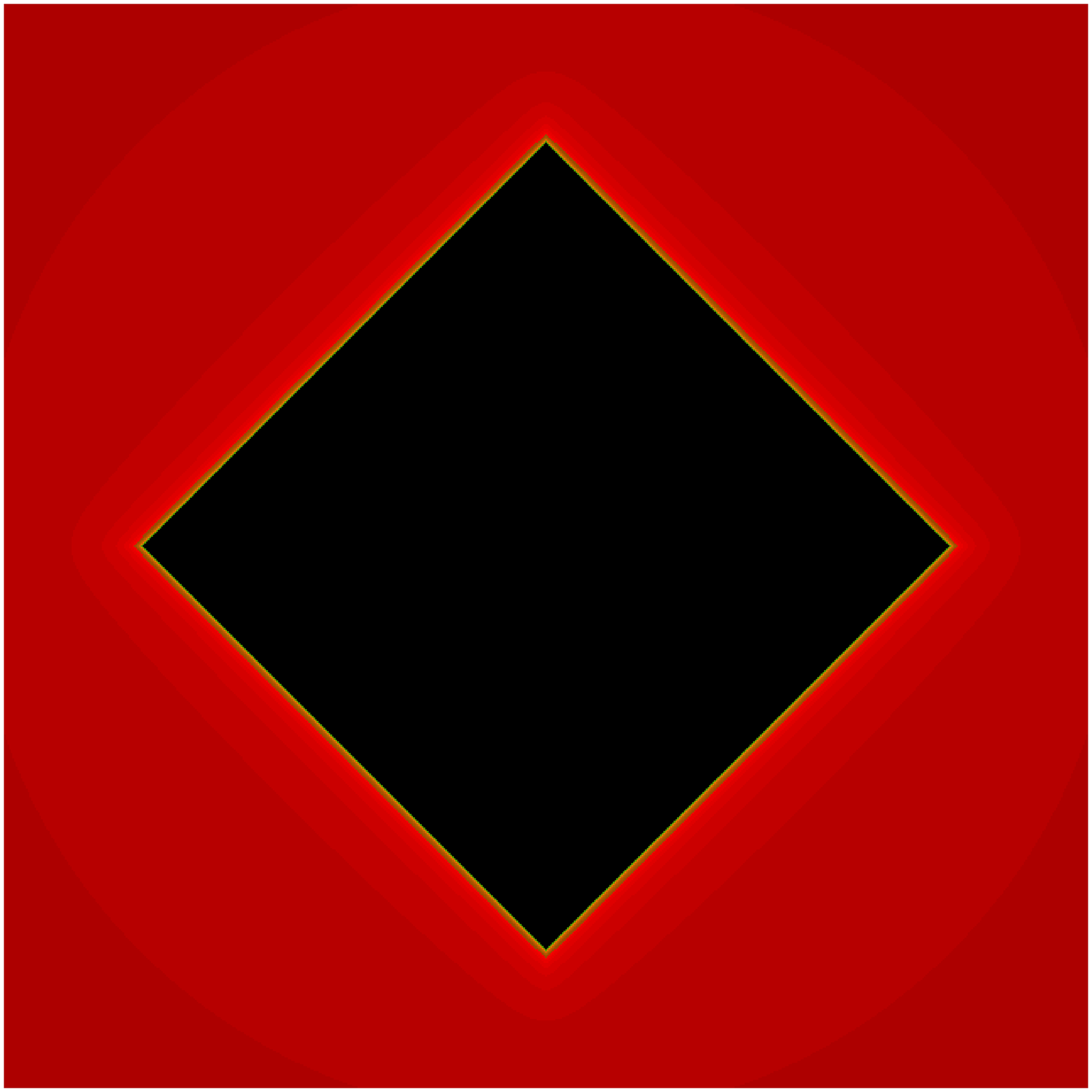}}
\caption{Hyperbrots for several odd integers, $-1 \leq \pre (c),  \mathrm{Hy} (c) \leq 1$. The black color represents the Hyperbrot.}\label{figureHyperbrots}
\end{figure}

To answer this question, let $\mathcal{S}(\mR^2)$ be the collection of non-empty compact subsets of $\mR^2$ and define the distance between $A$ and $B$ in $\mathcal{S}(\mR^2)$ as
	\begin{align*}
	d(A,B)&:= \max_{x \in A} \oa d(x,B) \fa \\
	&= \max_{x \in A} \oa \min_{y \in B} \oa \Vert x - y \Vert \fa \fa
	\end{align*}
where $\Vert \cdot \Vert$ is the Euclidean distance on $\mR^2$. With these definitions from \cite{Barnsley}, we have the following theorem.
	\begin{theorem}\label{HausdorffH2D}
	Let $(\mathcal{S}(\mR^2), h)$ be the so-called \textbf{Fractals metric space} on $\mR^2$ where $\mathcal{S}(\mR^2)$ is the collection of all non-empty compact subsets of $\mR^2$ and $h: \, \mathcal{S}(\mR^2) \times \mathcal{S}(\mR^2) \ra [0,+\infty )$ is the Hausdorff distance on the collection $\mathcal{S}(\mR^2)$ defined as $h(A,B) = \max \oa d(A,B) , d(B,A) \fa$. Let $\mHyb := \oa x+y\bj\in\mD \, : \, |x| + |y| \leq 1 \fa$. Then,
		\begin{align*}
		\lim_{n \ra \infty} h(\mHyb , \mHyb^{2n+1}) = 0 \text{.}
		\end{align*}
	\end{theorem}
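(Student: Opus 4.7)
The plan is to leverage Theorem~\ref{t3.2}, which identifies $\mHyb^{2n+1} = r_{2n+1}\,\mHyb$ with
\[
r_p := \frac{p-1}{p^{p/(p-1)}},
\]
so that each approximating Hyperbrot is a centered, homothetic copy of the limit square $\mHyb$. The whole problem then collapses to an estimate on a single scalar $r_p$.

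First I would check that $r_p \leq 1$ for every odd $p > 2$, which is immediate since $p > 1$ forces $p^{p/(p-1)} > p > p-1$. This yields the containment $\mHyb^{2n+1} \subseteq \mHyb$, hence $d(\mHyb^{2n+1}, \mHyb) = 0$, so that $h(\mHyb, \mHyb^{2n+1}) = d(\mHyb, \mHyb^{2n+1})$.

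Next I would bound the remaining one-sided distance by a radial retraction argument. For every $P \in \mHyb$, its dilate $r_{2n+1}P$ lies in $r_{2n+1}\,\mHyb = \mHyb^{2n+1}$, so
\[
d(P, \mHyb^{2n+1}) \;\leq\; \|P - r_{2n+1}P\| \;=\; (1-r_{2n+1})\,\|P\| \;\leq\; 1 - r_{2n+1},
\]
where the last inequality uses that the four vertices $(\pm 1,0),(0,\pm 1)$ of $\mHyb$ are the farthest points of $\mHyb$ from the origin, at Euclidean distance exactly $1$. Taking the supremum over $P \in \mHyb$ gives $h(\mHyb, \mHyb^{2n+1}) \leq 1 - r_{2n+1}$.

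The final step is to verify $\lim_{p \to \infty} r_p = 1$. Writing $r_p = \tfrac{p-1}{p}\,p^{-1/(p-1)}$, the first factor tends to $1$ trivially, and the second equals $\exp\bigl(-\ln(p)/(p-1)\bigr)$ which also tends to $1$. Combining with the previous bound along the subsequence $p = 2n+1$ gives $h(\mHyb, \mHyb^{2n+1}) \to 0$. I expect no real obstacle here: the crude radial bound is more than enough because both sets shrink together in lockstep with the single scalar $r_p$. The only point worth articulating carefully is the use of Theorem~\ref{t3.2} to replace the dynamical definition of $\mHyb^{2n+1}$ by an explicit homothety of $\mHyb$; once this reduction is in hand, the rest is a two-line estimate and an elementary limit.
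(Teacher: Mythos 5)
Your proof is correct, and it follows the same overall skeleton as the paper: invoke Theorem~\ref{t3.2}, use $m_p<1$ to get $\mHyb^{p}\subseteq\mHyb$ and hence $d(\mHyb^{p},\mHyb)=0$, estimate the remaining one-sided distance, and conclude from $m_{2n+1}\ra 1$. The difference lies in how the one-sided distance $d(\mHyb,\mHyb^{p})$ is handled. The paper computes the exact value $h(\mHyb,\mHyb^{p})=1-m_p$ by minimizing $\Vert c-z\Vert^2$ over $\partial\mHyb^{p}$ and asserting, on geometric grounds, that the maximum of this minimum is attained at the four vertices $(\pm 1,0),(0,\pm 1)$; that assertion is stated without a detailed verification. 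You instead observe that $\mHyb^{p}=m_p\,\mHyb$ is a homothetic copy of the limit square and use the radial retraction $P\mapsto m_p P$ to get the upper bound $d(P,\mHyb^{p})\le(1-m_p)\Vert P\Vert\le 1-m_p$, which suffices for the limit and is fully rigorous in two lines (note $\Vert P\Vert\le |x|+|y|\le 1$ on $\mHyb$). What you give up is the exact formula $h(\mHyb,\mHyb^{p})=1-m_p$, which the paper obtains and then reuses almost verbatim in the three-dimensional Perplexbrot analogue; what you gain is a cleaner argument that avoids the unproved extremal claim, and your explicit verification that $m_p=\frac{p-1}{p}\,p^{-1/(p-1)}\ra 1$ fills in a limit the paper simply states.
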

	
	\begin{proof}
	Let $p > 2$ be an odd integer and let $m_p := \frac{p-1}{p^{p/(p-1)}}$. From Theorem \ref{t3.2}, we see that $\mHyb^p \subset \mHyb$ since $m_p < 1$ $\forall p > 2$. Then, $d(\mHyb^p , \mHyb ) = 0$. So, we just have to calculate $d(\mHyb , \mHyb^p )$. 
	
	Suppose that $c=a+b\bj\in\mD$. Let $z=x+y\bj \in \mHyb^p$ and define the function $f(\cdot , c): \mHyb^p \ra [0,\infty )$ by
		\begin{align*}
		f(z,c) := \Vert c - z \Vert^2\text{.}
		\end{align*}			

	Obviously, if $c \in \mHyb^p$, then $f(c,c) = 0$ is the minimum on $\mHyb^p$. Moreover, if $c \in \mHyb - \mHyb^p$, the minimum must be on $\partial \mHyb^p= \oa x+y\bj\in\mD \, : \, |x| + |y| = m_p \fa$. Then, let define the function $g: \mHyb - \mHyb^p \ra [0,+\infty )$ by
	
	\begin{align*}
	g(c):=& \min_{z \in \partial \mHyb^p} \oa f(z,c) \fa \text{.}
	\end{align*}
	
Geometrically, the maximum of $g(c) $ occurs at the points $c_1 = (1,0)$, $c_2 = (0,1)$, $c_3 = (-1,0)$ and $c_4 = (0,-1)$ and the values are $g_{max}:= g(c_k) = (1-m_p)^2$ for $k = 1,2,3,4$. Thus, $h(\mHyb, \mHyb^p) = \sqrt{g_{max}} = (1-m_p)$. Finally, since $\lim_{n \ra \infty} m_{2n+1} = 1$, then we obtain
	\begin{align*}
	\lim\limits_{n\ra \infty} h(\mHyb , \mHyb^{2n+1}) = 0 \text{.}
	\end{align*}
	\end{proof}
	
	\subsection{Characterization of the generalized Perplexbrot}
	In this subsection, we generalize Hyperbrots in three dimensions. Let's adopt the same notation as in \cite{RochonParise} and \cite{GarantRochon} for the generalized \textit{Perplexbrot}
	\begin{align}
	\mathcal{P}^p:=\mathcal{T}^p(1,\bjp,\bjd)
=\oa c=c_0+c_5\bjp+c_6\bjd \, : \, c_i \in \mathbb{R} \text{ and } \right. \notag\\
\left. \oa Q_{p,c}^m(0)\fa_{m=1}^{\infty} \text{ is bounded} \fa \text{.}\label{eq3.3.1}
	\end{align}
	We say that $\mathcal{P}^p$ is the generalized Perplexbrot since in \cite{GarantRochon}, the classical Perplexbrot is $\mathcal{P}^2$. We note that a generalized Perplexbrot of order $p$ is in fact a specific 3D slice of a tricomplex Multibrot of order $p$ as for the Tetrabric, Hourglassbric and Metabric introduced in \cite{RochonParise} for the case $p=3$. Before proving the fact that $\mathcal{P}^p$ is an octahedron for all odd integers $p>2$, we need this following lemma.

	\begin{lemma}\label{lem3.3.1}
	We have the following characterization of the generalized Perplexbrot
		\begin{equation*}
		\mathcal{P}^p=\bigcup_{y\in \left[-m_p,m_p\right]} \oa \left[ (\mathcal{H}^p-y\bjp)\cap (\mathcal{H}^p+y\bjp)\right] + y\bjd\fa
		\end{equation*}
	where $\mathcal{H}^p$ is the Hyperbrot for an odd integer $p > 2$ and $m_p:=\frac{(p-1)}{p^{p/(p-1)}}$.
	\end{lemma}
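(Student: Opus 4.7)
The plan is to transport the problem into idempotent coordinates, where tricomplex dynamics splits into two independent hyperbolic dynamics, and then sweep the resulting slab in the $\bjd$-direction. Take $c = c_0 + c_5\bjp + c_6\bjd \in \mathcal{T}(1,\bjp,\bjd)$ and write it in bicomplex form $c = \zeta_1 + \zeta_2\bb$. Reading $\bjd = \bo\bb$ off Table~\ref{tabC1} gives $\zeta_1 = c_0 + c_5\bjp$ and $\zeta_2 = c_6\bo$. Since $\zeta_2\bt = c_6\,\bo\bt = c_6\bjp$, the idempotent representation \eqref{eq2.7} produces
\begin{equation*}
c = v_1\,\ett + v_2\,\etc, \qquad v_1 := c_0 + (c_5-c_6)\bjp,\ \ v_2 := c_0 + (c_5+c_6)\bjp,
\end{equation*}
with $v_1,v_2$ both lying in the hyperbolic subspace $\mD(\bjp)$.

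Next, I would combine Theorem~\ref{theo2.2}(3) with the componentwise addition rule to get
\begin{equation*}
Q_{p,c}^{m}(0) = Q_{p,v_1}^{m}(0)\,\ett + Q_{p,v_2}^{m}(0)\,\etc \qquad \text{for all } m\geq 1.
\end{equation*}
Because the tricomplex norm is equivalent to the sum of the norms of its two idempotent components (cf.\ the identity stated just after \eqref{eq2.15}), the orbit $\{Q_{p,c}^m(0)\}_{m=1}^{\infty}$ is bounded if and only if both hyperbolic orbits $\{Q_{p,v_i}^{m}(0)\}_{m=1}^{\infty}$ ($i=1,2$) are bounded, equivalently iff $v_1,v_2 \in \mathcal{H}^p$. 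This is exactly the componentwise-boundedness principle already invoked in the proof of Theorem~\ref{t3.2}.

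Finally, I would parameterize by $y := c_6$ and set $h := c_0 + c_5\bjp \in \mD(\bjp)$, so that $v_1 = h - y\bjp$ and $v_2 = h + y\bjp$. Then $v_1,v_2 \in \mathcal{H}^p$ becomes $h \in (\mathcal{H}^p + y\bjp)\cap(\mathcal{H}^p - y\bjp)$, and writing $c = h + y\bjd$ yields
\begin{equation*}
\{c \in \mathcal{P}^p : c_6 = y\} = \bigl[(\mathcal{H}^p - y\bjp)\cap(\mathcal{H}^p + y\bjp)\bigr] + y\bjd,
\end{equation*}
so taking the union over $y$ gives the claimed formula. The range $y \in [-m_p,m_p]$ is forced by Theorem~\ref{t3.2}: if the intersection above is non-empty and $h = h_0 + h_5\bjp$ belongs to it, then $|h_0| + |h_5 \pm y| \leq m_p$, and since $|h_5+y| + |h_5-y| \geq 2|y|$ this forces $|y| \leq m_p$; conversely for such $y$ the point $h=0$ already lies in the intersection.

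The main obstacle is pure bookkeeping: keeping the three identifications (tricomplex $\leftrightarrow$ bicomplex $\leftrightarrow$ idempotent pair) consistent so that the $\bt$-twist in $\zeta_2\bt$ assigns the correct signs of $y$ to the two translates $\mathcal{H}^p \pm y\bjp$. Once the decomposition is set up, all the analytic content — componentwise iteration from Theorem~\ref{theo2.2} and the hyperbolic characterization from Theorem~\ref{t3.2} — is already in place.
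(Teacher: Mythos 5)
Your proposal is correct and follows essentially the same route as the paper: split $c=c_0+c_5\bjp+c_6\bjd$ into the idempotent components $d\mp c_6\bjp$ with $d=c_0+c_5\bjp$, use componentwise iteration and boundedness to reduce membership in $\mathcal{P}^p$ to $d\in(\mathcal{H}^p-c_6\bjp)\cap(\mathcal{H}^p+c_6\bjp)$, and then invoke Theorem \ref{t3.2} to restrict the union to $|y|\leq m_p$. The only differences are cosmetic: you spell out the idempotent bookkeeping and give an explicit inequality argument for the emptiness of the intersection when $|y|>m_p$, which the paper simply cites from Theorem \ref{t3.2}.
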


	\begin{proof}
	By Definition of $\mathcal{P}^p$ and the idempotent representation, we have that
		\begin{equation}
		\mathcal{P}^p=\oa c=\left( d-c_6\bjp\right) \gamma_2 + \left( d + c_6\bjp\right) \overline{\gamma}_2 \, : \, \oa Q_{p,c}^m(0)\fa_{m=1}^{\infty} \text{ is bounded} \fa \label{PDefRemod}
		\end{equation}
	where $d=c_0+c_5\bjp \in \mH (\bjp )$. Furthermore, the sequence $\oa Q_{p,c}^m(0)\fa_{m=1}^{\infty}$ is bounded iff the two sequences $\oa Q_{p,d-c_6\bjp}^m(0)\fa_{m=1}^{\infty}$ and $\oa Q_{p,d+c_6\bjp}^m(0)\fa_{m=1}^{\infty}$ are bounded. To continue, we make the following remark about hyperbolic dynamics: $\forall z \in \mathbb{D}$
		\begin{equation}
		\mathcal{H}^p-z:=\oa c \in \mathbb{D} \, | \, \oa Q_{p,c+z}^m(0)\fa_{m=1}^{\infty} \text{ is bounded } \fa \text{.}\label{eq3.3.16}
		\end{equation}
	By Definition \ref{d3.1}, $\oa Q_{p,d-c_6\bjp}^m(0)\fa_{m=1}^{\infty}$ and $\oa Q_{p,d+c_6\bjp}^m(0)\fa_{m=1}^{\infty}$ are bounded iff $d-c_6\bjp, d+c_6\bjp \in \mathcal{H}^p$. Therefore, by \eqref{eq3.3.16}, we also have that $d-c_6\bjp, d+c_6\bjp \in \mathcal{H}^p$ iff $d \in (\mathcal{H}^p-c_6\bjp) \cap (\mathcal{H}^p+c_6\bjp)$. Hence,
		\begin{align*}
		\mathcal{P}^p&=\oa c=c_0+c_5\bjp+c_6\bjd \, | \, c_0+c_5\bjp \in (\mathcal{H}^p-c_6\bjp) \cap (\mathcal{H}^p+c_6\bjp) \fa \\
		&=\bigcup_{y\in \mathbb{R}} \oa \left[(\mathcal{H}^p-y\bjp)\cap (\mathcal{H}^p+y\bjp)\right] + y\bjd\fa\text{.}
		\end{align*}
	In fact, by Theorem \ref{t3.2},
		\begin{equation}
		(\mathcal{H}^p-y\bjp)\cap (\mathcal{H}^p+y\bjp)=\emptyset
		\end{equation}
	whenever $y\in \left[ -\frac{p-1}{p^{p/(p-1)}},\frac{p-1}{p^{p/(p-1)}}\right]^{c}$. This conduct us to the desire characterization of the generalized Perplexbrot.
	\end{proof}

We remark that for a given $y \in [- m_p , m_p ]$, the intersection $(\mathcal{H}^p-y\bjp)\cap (\mathcal{H}^p+y\bjp)$ is exactly the cross section parallel to the $xy$-plane in $\mR^3$ of an octahedron of diagonal length equals to $2m_p$. Hence, as a consequence of Lemma \ref{lem3.3.1} and Theorem \ref{t3.2}, we have the following corollary illustrated by figure \ref{figureHyperbrots2}:

	\begin{corollary}\label{theo3.2.5}
	$\mathcal{P}^p$ is a regular octahedron of edges $\sqrt{2}\frac{(p-1)}{p^{p/(p-1)}}$ where $p>2$ is an odd integer. Moreover, the generalized Perplexbrot can be rewritten as the set
		\begin{equation}
		\mathcal{P}^p = \oa x + y\bjp + z \bjd \, : \, (x,y,z) \in \mR^3 \, \text{ and } \, |x| + |y| + |z| \leq m_p \fa\text{.}
		\end{equation}
	\end{corollary}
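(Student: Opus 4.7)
The plan is to combine Lemma \ref{lem3.3.1} with the square characterization of the Hyperbrot from Theorem \ref{t3.2} to compute the cross section $(\mathcal{H}^p - y\bjp) \cap (\mathcal{H}^p + y\bjp)$ explicitly for each fixed $y \in [-m_p, m_p]$, then assemble these slices into the claimed solid and read off the edge length.

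First, I would parametrize a generic element of $\mathcal{H}^p \pm y\bjp$. By \eqref{eq3.3.16} combined with Theorem \ref{t3.2}, a hyperbolic number $d = x + s\bjp$ lies in $\mathcal{H}^p - y\bjp$ iff $d + y\bjp \in \mathcal{H}^p$, which is equivalent to $|x| + |s + y| \leq m_p$. Similarly, $d \in \mathcal{H}^p + y\bjp$ iff $|x| + |s - y| \leq m_p$. Intersecting the two constraints gives
\begin{equation*}
(\mathcal{H}^p - y\bjp) \cap (\mathcal{H}^p + y\bjp) = \{x + s\bjp \in \mD \,:\, |x| + \max(|s+y|,|s-y|) \leq m_p\}.
\end{equation*}

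The next step is the small algebraic identity $\max(|s+y|,|s-y|) = |s| + |y|$, valid for all real $s,y$ (it follows by splitting on the sign of $sy$). Substituting and then applying Lemma \ref{lem3.3.1} with $z := y$ the coordinate along the $\bjd$-axis gives
\begin{equation*}
\mathcal{P}^p = \bigcup_{z \in [-m_p,m_p]} \{x + s\bjp + z\bjd \,:\, |x| + |s| + |z| \leq m_p\}.
\end{equation*}
The range $z \in [-m_p,m_p]$ is automatic from the inequality $|x|+|s|+|z|\leq m_p$, so this union collapses into the single set $\{x + y\bjp + z\bjd \,:\, |x|+|y|+|z|\leq m_p\}$, establishing the second assertion of the corollary.

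Finally, the set $\{(x,y,z)\in\mR^3 : |x|+|y|+|z|\leq m_p\}$ is the standard regular octahedron with vertices $(\pm m_p,0,0)$, $(0,\pm m_p,0)$, $(0,0,\pm m_p)$. Each edge connects two adjacent vertices differing by $\pm m_p$ in two coordinates, so the edge length is $\sqrt{m_p^2 + m_p^2} = \sqrt{2}\,m_p = \sqrt{2}\,\frac{p-1}{p^{p/(p-1)}}$, which is the stated value. There is no real obstacle here beyond the identity $\max(|s+y|,|s-y|) = |s|+|y|$; the rest is bookkeeping, and the heavy lifting has already been done in Theorem \ref{t3.2} and Lemma \ref{lem3.3.1}.
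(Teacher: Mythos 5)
Your proof is correct and takes essentially the same route the paper does: it deduces the corollary from Lemma \ref{lem3.3.1} together with the square characterization of Theorem \ref{t3.2}, identifying each cross section $(\mathcal{H}^p-y\bjp)\cap(\mathcal{H}^p+y\bjp)$ and assembling the slices into the octahedron. The only difference is that you spell out, via the identity $\max(|s+y|,|s-y|)=|s|+|y|$, the computation the paper leaves as a geometric remark about the cross sections, so your write-up is simply a more explicit version of the paper's argument.
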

	
\begin{figure}
\centering
\subfigure[$\cP^3$]{
	\includegraphics[scale = 0.2]{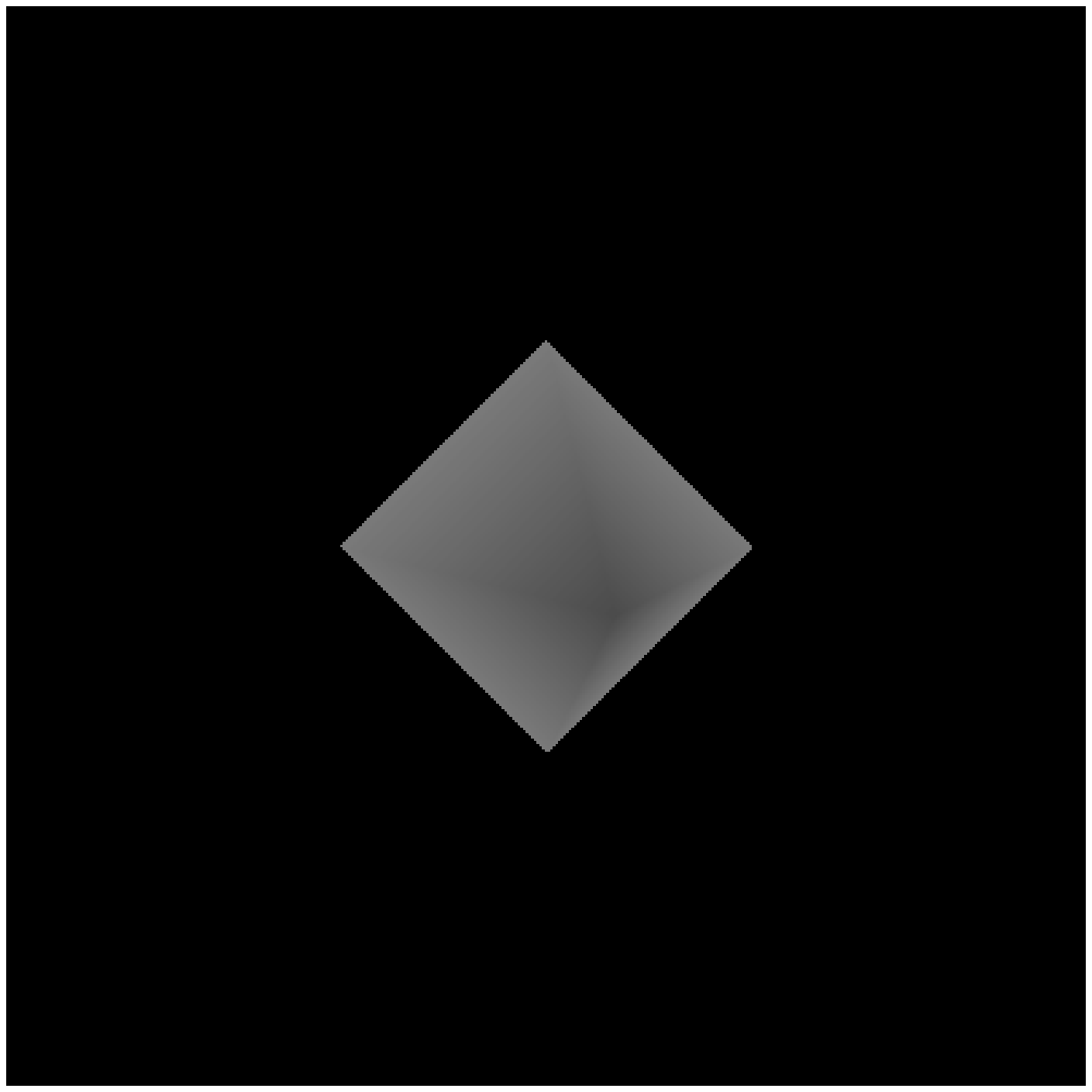}}
\subfigure[$\cP^5$]{
	\includegraphics[scale = 0.2]{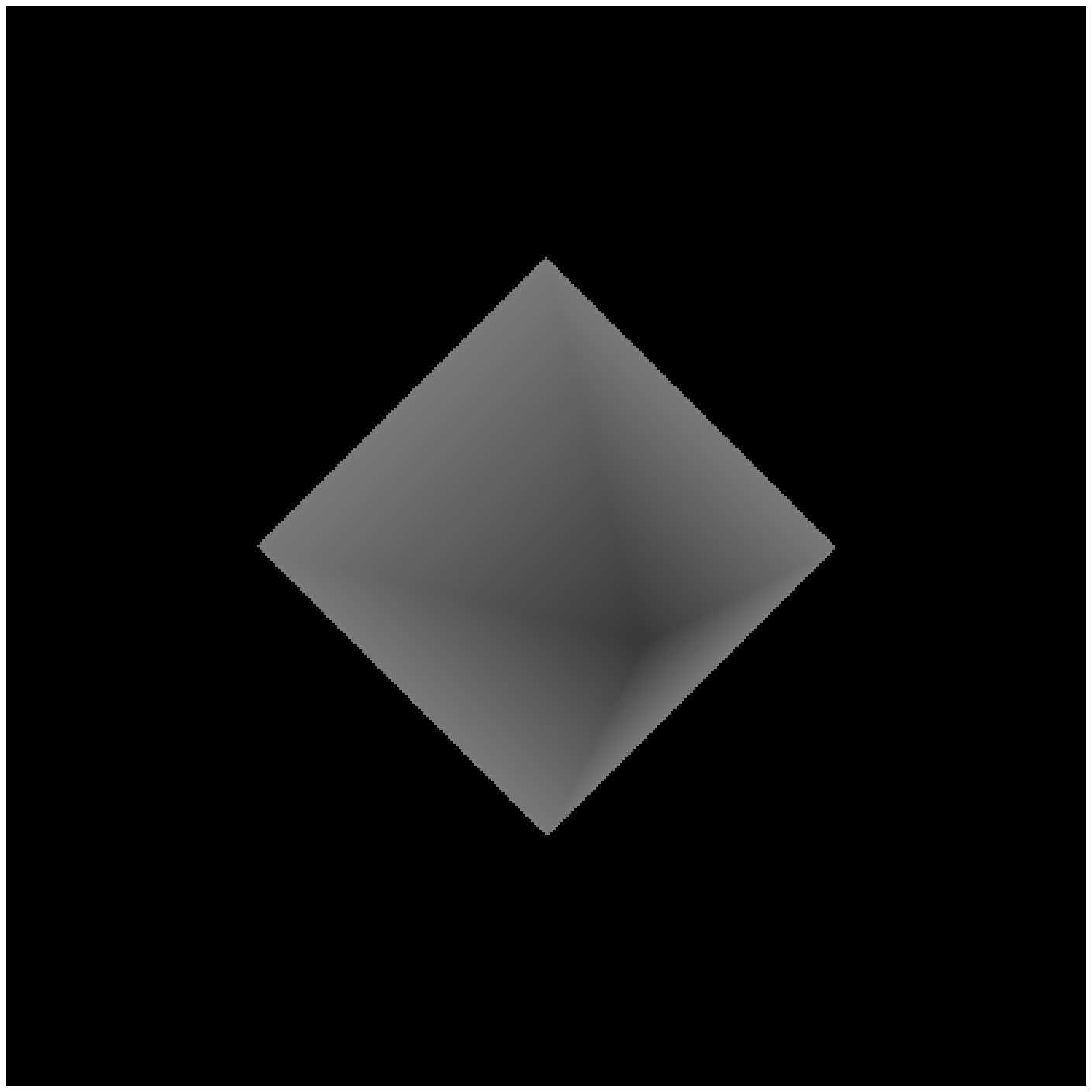}}
\subfigure[$\cP^9$]{
	\includegraphics[scale = 0.2]{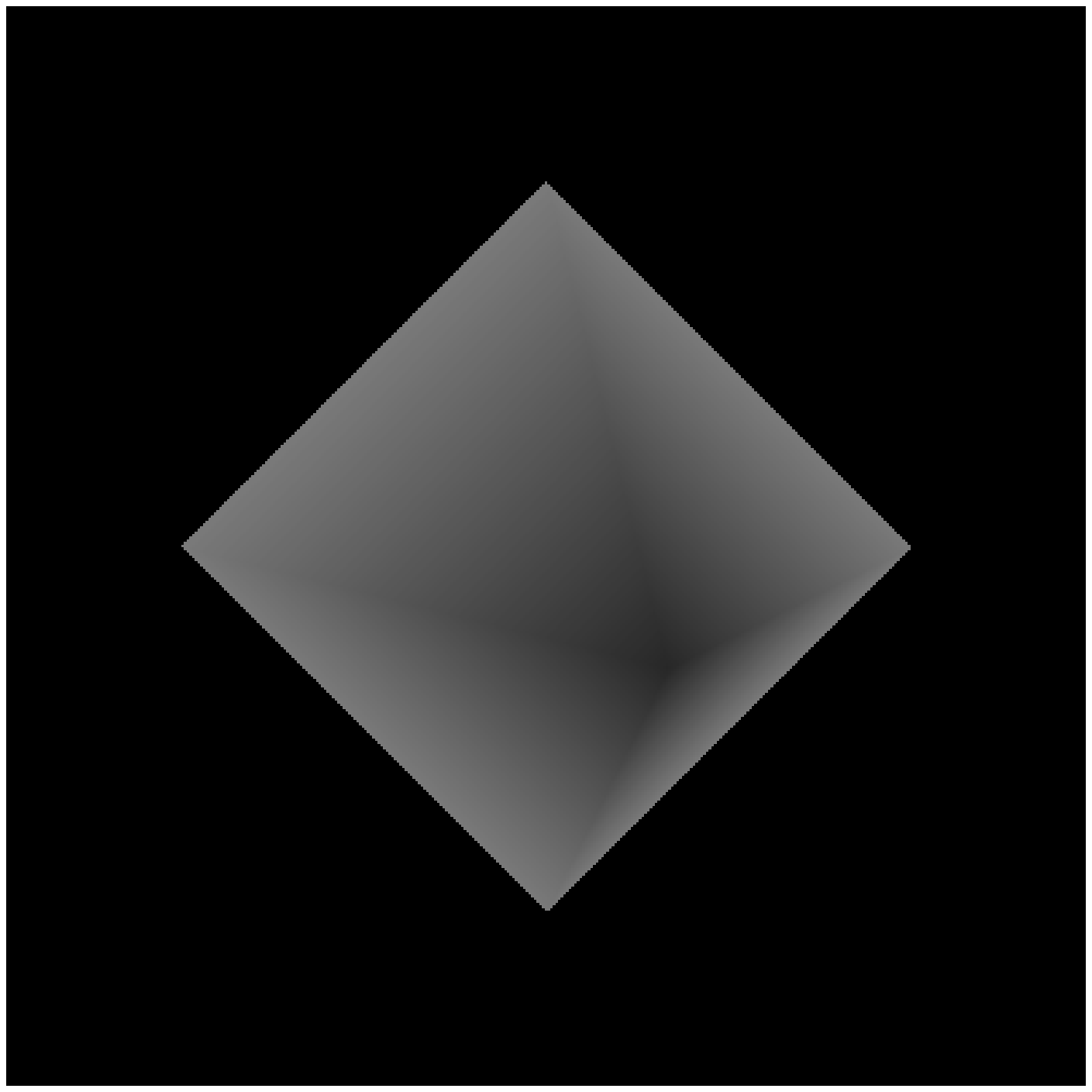}}
\subfigure[$\cP^{13}$]{
	\includegraphics[scale = 0.2]{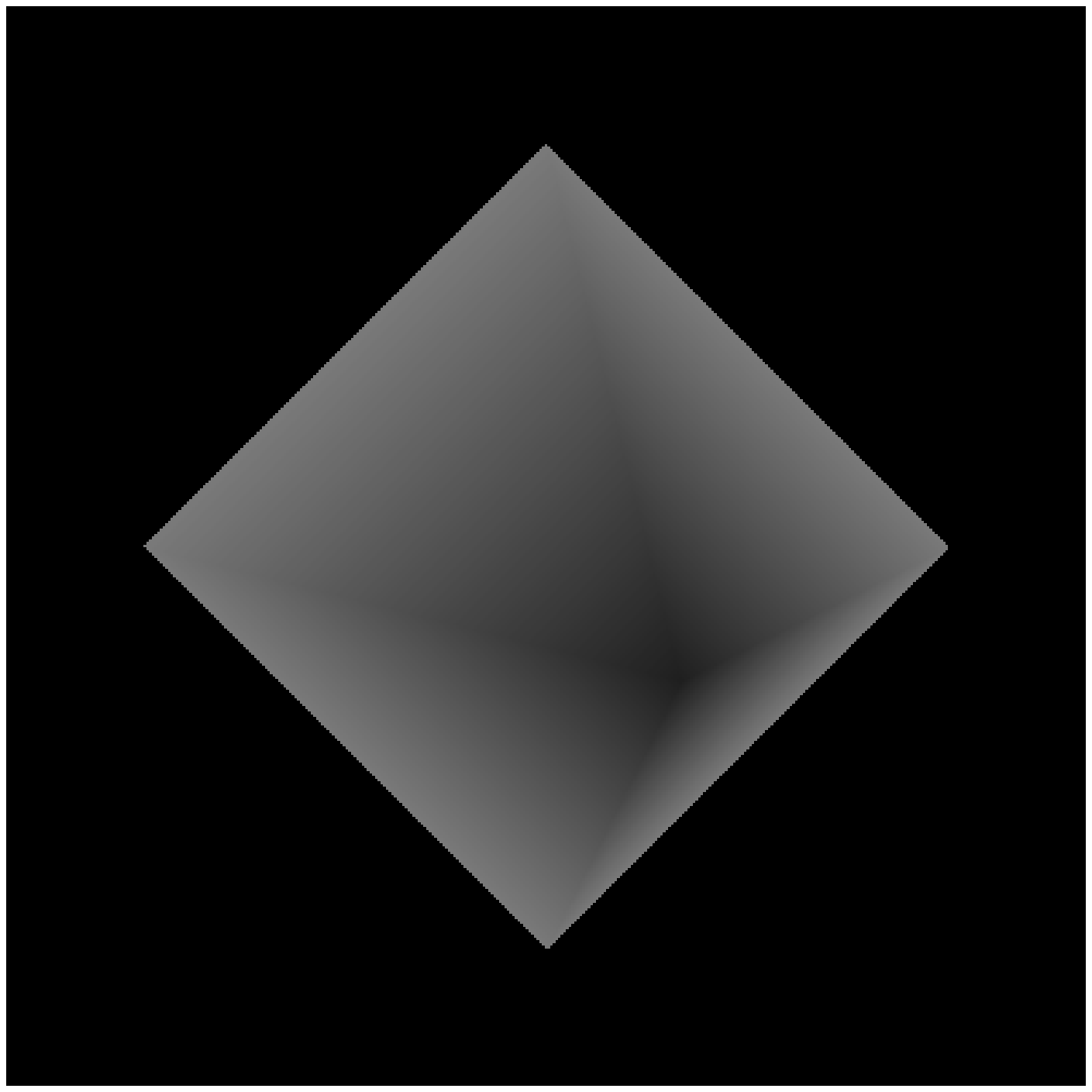}}
\caption{Hyperbrots for several odd integers, $-1 \leq x,y,z \leq  1$}\label{figureHyperbrots2}
\end{figure}

Similarly to what was done for Hyperbrot sets, we prove that as $ p \ra \infty$, the sequence of generalized Perplexbrots converges to a non-empty compact subset of $\mR^3$.

	\begin{theorem}
	Let $(\mathcal{S}(\mR^3), h)$ be the so-called \textbf{Fractals metric space} on $\mR^3$ where $\mathcal{S}(\mR^3)$ is the collection of all non-empty compact subsets of $\mR^3$ and $h: \, \mathcal{S}(\mR^3) \times \mathcal{S}(\mR^3) \ra [0,+\infty )$ is the Hausdorff distance on the collection $\mathcal{S}(\mR^3)$ defined as $h(A,B) = \max \oa d(A,B) , d(B,A) \fa$. Let
	\begin{align*}
		\cP := \oa x + y \bjp + z\bjd \, : \,(x,y,z)\in \mR^3\, \text{ and } \, |x| + |y| + |z| \leq 1 \fa\text{.}
	\end{align*}
	Then, 
		\begin{align*}
		\lim_{n \ra \infty} h(\cP , \cP^{2n+1}) = 0 \text{.}
		\end{align*}
	\end{theorem}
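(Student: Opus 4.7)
The plan is to mirror the two-dimensional argument of Theorem \ref{HausdorffH2D}, using the explicit octahedral description of $\cP^p$ provided by Corollary \ref{theo3.2.5}. First I would observe that since $m_p = \frac{p-1}{p^{p/(p-1)}} < 1$ for every odd $p > 2$, Corollary \ref{theo3.2.5} immediately gives $\cP^p \subset \cP$. Consequently $d(\cP^p, \cP) = 0$ and the Hausdorff distance reduces to $h(\cP, \cP^p) = d(\cP, \cP^p)$.

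Next I would compute $d(\cP, \cP^p)$ geometrically. For any point $c = (x_0,y_0,z_0) \in \cP$ with $s := |x_0|+|y_0|+|z_0| \leq 1$, one seeks the minimal Euclidean distance to $\cP^p$. If $s \leq m_p$, the distance is $0$. Otherwise, by reflecting into the positive octant, the nearest point on $\cP^p$ lies on the triangular face with vertices $(\pm m_p,0,0)$, $(0,\pm m_p,0)$, $(0,0,\pm m_p)$ matching the signs of $(x_0,y_0,z_0)$. A short projection computation (or convexity argument) shows that this minimum distance is a monotonic function of the configuration and is maximized precisely at the vertices of $\cP$, namely $(\pm 1,0,0)$, $(0,\pm 1,0)$, $(0,0,\pm 1)$, with maximal value $1 - m_p$. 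Thus
\begin{equation*}
h(\cP, \cP^p) = 1 - m_p.
\end{equation*}

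Finally, I would verify $m_p \to 1$ as $p \to \infty$ by taking logarithms:
\begin{equation*}
\log m_p = \log\!\left(1 - \tfrac{1}{p}\right) - \tfrac{\log p}{p-1},
\end{equation*}
and each of these two terms tends to $0$ as $p \to \infty$. Restricting to odd integers $p = 2n+1$ yields $\lim_{n \to \infty}(1 - m_{2n+1}) = 0$, hence $\lim_{n \to \infty} h(\cP, \cP^{2n+1}) = 0$.

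The main obstacle I expect is the justification that the maximum in the definition of $d(\cP, \cP^p)$ is attained at the vertices of $\cP$ rather than at an interior point or an edge. To handle this cleanly, I would argue that since both $\cP$ and $\cP^p$ are octahedra with the same orientation and center, the radial scaling map $c \mapsto (m_p/1)\,c$ sends $\cP$ onto $\cP^p$; the distance from $c$ to $\cP^p$ is then bounded above by $\|c - (m_p)c\| = (1-m_p)\|c\|$, and $\|c\|$ on $\cP$ is maximized exactly at the six vertices (where $\|c\| = 1$). Combined with the observation that the vertices of $\cP$ project onto the corresponding vertices of $\cP^p$ (and not onto the relative interior of a face), this gives the exact value $1 - m_p$ without case analysis.
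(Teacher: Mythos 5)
Your proposal is correct, and its overall skeleton is the same as the paper's: show $\cP^p\subset\cP$ (so $d(\cP^p,\cP)=0$), establish $h(\cP,\cP^p)=1-m_p$ with the maximum attained at the six vertices of $\cP$, and conclude from $m_p\to 1$. Where you differ is in how the key step is justified: the paper simply invokes ``elementary calculus'' and refers back to the geometric assertion in the two-dimensional proof of Theorem \ref{HausdorffH2D}, whereas you make this step rigorous with the dilation $c\mapsto m_p c$, which maps $\cP$ onto $\cP^p$ and yields the uniform bound $d(c,\cP^p)\leq(1-m_p)\Vert c\Vert_2\leq(1-m_p)\Vert c\Vert_1\leq 1-m_p$, matched from below at a vertex since every $(x,y,z)\in\cP^p$ has $x\leq m_p$, so $d\bigl((1,0,0),\cP^p\bigr)\geq 1-m_p$. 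This scaling-plus-norm-comparison argument is cleaner than the case analysis you initially sketch (and than the paper's appeal to calculus), and it removes any need to identify on which face, edge, or vertex the nearest point lies; your verification that $\log m_p=\log(1-\tfrac1p)-\tfrac{\log p}{p-1}\to 0$ also supplies the limit $m_{2n+1}\to 1$, which the paper states without computation. In short: same strategy, but your justification of $h(\cP,\cP^p)=1-m_p$ is more self-contained and, in fact, only the upper bound $h(\cP,\cP^p)\leq 1-m_p$ is needed for the limit statement.
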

	
	\begin{proof}
	Let $p > 2$ be an odd integer and let $m_p = \frac{p-1}{p^{p/(p-1)}}$. Using elementary calculus and by referring the reader to the proof of the Theorem \ref{HausdorffH2D} for the two dimensional case, we find that $h(\cP , \cP^p) = (1-m_p)$ where the maximum occurs at the corners of the set $\cP$. Hence, 
		\begin{align*}
		\lim_{n \ra \infty} h(\cP , \cP^{2n+1}) = 1 - 1 = 0
		\end{align*}		 
	since $\lim\limits_{n\ra \infty} m_{2n+1} = 1$.
	\end{proof}
	
\section*{Conclusion}
In this work, we treated Multibrot sets for a specific polynomial of odd degree. We saw that complex and hyperbolic dynamical systems can be treated entirely with the tricomplex space. We also saw that complex and hyperbolic dynamical systems generated by the polynomial $Q_{p,c}$ for odd integers are really different: the first one gives irregular shapes and the second one gives regular shapes as squares. Moreover, the generalize 3D versions of the Hyperbrot sets are regular octahedron.

For the case of complex \textit{Multibrot} sets, it would be grateful if we can grade-up the proof of Theorem \ref{t2.3.1} for all Multibrots of even powers. For this, we need to find a specific approach to prove the following conjecture.
	\begin{conjecture}\label{cj1}
	Let $\mManp$ be the generalized Mandelbrot set for the polynomial $Q_{p,c}(z)=z^p+c$ where $z,c \in \mC$ and $p\geq 2$ an integer. If $p$ is even, then
		\begin{equation}
		\mManp \cap \mR = \left[-2^{\frac{1}{p-1}},(p-1)p^{\frac{-p}{p-1}} \right]
		\end{equation}
	\end{conjecture}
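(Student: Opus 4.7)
The plan is to mirror the proof of Theorem \ref{t2.3.1}: split $\mR$ into four pieces and treat each in turn. The main difference relative to the odd case is that for even $p$ the polynomial $R_{p,c}(x) = x^p - x + c$ is convex, since $R''_{p,c}(x) = p(p-1)x^{p-2} \geq 0$ now that $p-2$ is even. It therefore has a unique global minimum at $w = p^{-1/(p-1)}$ of value $-\frac{p-1}{p^{p/(p-1)}} + c$, and consequently: no real roots for $c > \frac{p-1}{p^{p/(p-1)}}$, a unique double root at $w$ at the equality, and exactly two simple real roots $a_1 < w < a_2$ otherwise. Moreover $R_{p,c}(0) = c$, so $a_1 < 0 < a_2$ whenever $c < 0$. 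I will write $M := 2^{1/(p-1)}$ throughout.

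The two exclusions are immediate. If $c > \frac{p-1}{p^{p/(p-1)}}$, there is no real fixed point of $Q_{p,c}$, so Lemma \ref{lemIncSeqPoly} (whose proof does not use the parity of $p$) forces the strictly increasing sequence $\{Q_{p,c}^m(0)\}$ to be unbounded, hence $c \notin \mManp$. If $c < -M$, then $|Q_{p,c}(0)| = |c| > M$ and Theorem \ref{t2.2.2} immediately gives $c \notin \mManp$. For the inclusion on $c \in [0, \frac{p-1}{p^{p/(p-1)}}]$, I would reuse the induction at the end of Theorem \ref{t2.3.1} almost verbatim: one checks $0 \leq c \leq a_2 \leq 1$, and since $Q_{p,c}$ is increasing on $[0, \infty)$, the orbit stays in $[0, a_2]$.

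The essentially new part is the inclusion for $c \in [-M, 0]$, where the plan is to show that $[-a_2, a_2]$ is $Q_{p,c}$-invariant. Two estimates are needed. First, $R_{p,c}(M) = M^p - M + c = 2M - M + c = M + c \geq 0$, and convexity plus monotonicity of $R_{p,c}$ on $[w, \infty)$ force $a_2 \leq M$. Second, and this is where the precise value of the escape radius enters,
\begin{equation*}
R_{p,c}(|c|) \;=\; |c|^p - |c| + c \;=\; |c|^p - 2|c| \;=\; |c|\bigl(|c|^{p-1} - 2\bigr) \;\leq\; 0,
\end{equation*}
because $|c| \leq M$ forces $|c|^{p-1} \leq M^{p-1} = 2$. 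Since $|c| > 0 > a_1$, this places $|c|$ inside $[a_1, a_2]$, so $|c| \leq a_2$ and hence $c \geq -a_2$. Now for any $x \in [-a_2, a_2]$, evenness of $p$ gives $x^p \in [0, a_2^p]$, so $Q_{p,c}(x) \in [c, a_2^p + c] = [c, a_2] \subseteq [-a_2, a_2]$; the orbit of $0$ therefore stays in $[-a_2, a_2]$, giving $|Q_{p,c}^m(0)| \leq a_2 \leq M$ for every $m$, and Theorem \ref{t2.2.2} yields $c \in \mManp$. The main obstacle is precisely the second estimate: the whole argument hinges on the numerical identity $M^{p-1} = 2$, which is exactly what ensures $R_{p,c}(|c|) \leq 0$ and makes $[-a_2, a_2]$ an invariant interval containing $0$; any naive attempt to use $[-M, M]$ directly as invariant set fails because $x^p$ can reach $2M$ there.
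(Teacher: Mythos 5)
The paper offers no proof to compare against: the statement is precisely Conjecture \ref{cj1}, which the authors leave open, remarking that the method of Theorem \ref{t2.3.1} does not carry over directly to even $p$ and that ``a specific approach'' is needed. Your argument supplies exactly that missing approach, and I find it correct. The structural changes you make for even $p$ are the right ones: $R_{p,c}(x)=x^p-x+c$ is now convex with a single critical point $w=p^{-1/(p-1)}$ and minimum value $c-\frac{p-1}{p^{p/(p-1)}}$, so for $c>\frac{p-1}{p^{p/(p-1)}}$ there is no real fixed point and Lemma \ref{lemIncSeqPoly} (whose proof indeed never uses parity) plus continuity of $Q_{p,c}$ rules out boundedness, exactly as in the paper's own exclusion step for odd $p$; the exclusion $c<-2^{1/(p-1)}$ is immediate from Theorem \ref{t2.2.2}; the inclusion on $[0,\frac{p-1}{p^{p/(p-1)}}]$ is the paper's induction with $a_2$ in place of $a_3$. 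The genuinely new piece, $c\in[-2^{1/(p-1)},0]$, is where the odd-case triangle-inequality induction would fail (it only yields $|Q^{k+1}_{p,c}(0)|\le a_2^p+|c|$, which overshoots when $c<0$), and your signed invariant interval $[-a_2,a_2]$ fixes this correctly: $R_{p,c}(2^{1/(p-1)})=2^{1/(p-1)}+c\ge 0$ gives $a_2\le 2^{1/(p-1)}$, the identity $(2^{1/(p-1)})^{p-1}=2$ gives $R_{p,c}(|c|)=|c|(|c|^{p-1}-2)\le 0$ hence $-a_2\le c$, and then $x\in[-a_2,a_2]$ implies $Q_{p,c}(x)\in[c,a_2^p+c]=[c,a_2]\subseteq[-a_2,a_2]$ since $p$ is even and $a_2$ is a fixed point. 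This also reproduces the classical case $p=2$, $\mMan^2\cap\mR=[-2,\tfrac14]$, and is consistent with the boundary behavior (for $c=-2^{1/(p-1)}$ the orbit is eventually fixed at $2^{1/(p-1)}$). In short: the proposal does not merely differ from the paper's proof, it closes a gap the paper explicitly leaves open, using only the paper's own tools (Lemma \ref{lemIncSeqPoly}, Theorem \ref{t2.2.2}) together with the convexity analysis; one could note in passing that the bound $a_2\le 2^{1/(p-1)}$ is not strictly needed, since boundedness of the orbit already suffices by Definition \ref{d3.1}.
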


This result would conducts us to another conjecture about the Hyperbrots.
	\begin{conjecture}\label{cj2}
	The Hyperbrots are squares and the following characterization of \textit{Hyperbrots} for even powers $p \geq 2$ holds:
		\begin{equation}
		\mHyb^p = \oa x+y\bj\in\mD \, : \, |x-t_p| + |y| \leq \frac{l_p}{2} \fa
		\end{equation}
	where 
		\begin{align*}
		t_p:= \frac{(1 - (2p)^{1/(p-1)})p - 1}{2p(p^{1/(p-1)})} \quad \text{ and } \quad l_p:= \frac{((2p)^{1/(p-1)} + 1)p - 1}{p(p^{1/(p-1)})}\text{.}
		\end{align*}
	\end{conjecture}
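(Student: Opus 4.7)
The plan is to run exactly the same reduction used in the proof of Theorem~\ref{t3.2}, but starting from the (conjectured) even-$p$ real cross-section instead of the odd-$p$ one. First, observe that Lemma~\ref{lemHyperIterationRealCompo}, and the splitting of boundedness of $\{\bHpc^m(\mathbf{0})\}$ into boundedness of the two real orbits $\{Q_{p,\,x-y}^m(0)\}$ and $\{Q_{p,\,x+y}^m(0)\}$, never uses the parity of $p$. Hence for every integer $p \geq 2$, a hyperbolic number $c = x + y\bj$ belongs to $\mHyb^p$ iff both $x-y$ and $x+y$ lie in $\mMan^p \cap \mR$. Assuming Conjecture~\ref{cj1}, with $A := 2^{1/(p-1)}$ and $B := (p-1)\,p^{-p/(p-1)}$, this is equivalent to the four inequalities
\begin{equation*}
-A \leq x - y \leq B \quad \text{and} \quad -A \leq x + y \leq B.
\end{equation*}

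In the linear coordinates $(u,v) := (x - y,\, x + y)$, this region is the axis-aligned square $[-A, B] \times [-A, B]$. Pulling back via $x = (u+v)/2$, $y = (v - u)/2$ gives the convex quadrilateral whose four vertices (obtained by pairing the extremal values of $u$ and $v$) are
\begin{equation*}
(-A,\, 0), \quad (B,\, 0), \quad \bigl(\tfrac{B-A}{2},\; \tfrac{A+B}{2}\bigr), \quad \bigl(\tfrac{B-A}{2},\; -\tfrac{A+B}{2}\bigr).
\end{equation*}
Since both diagonals have equal length $A + B$ and cross perpendicularly at the common midpoint $\bigl(\tfrac{B-A}{2},\,0\bigr)$, the region is exactly the diamond
\begin{equation*}
\Bigl\{(x,y)\in \mR^2 \,:\, \bigl|\,x - \tfrac{B-A}{2}\,\bigr| + |y| \leq \tfrac{A+B}{2}\Bigr\}.
\end{equation*}
Setting $t_p := (B-A)/2$ and $l_p := A + B$, an elementary simplification using $p^{p/(p-1)} = p\cdot p^{1/(p-1)}$ and $2^{1/(p-1)} \cdot p^{1/(p-1)} = (2p)^{1/(p-1)}$ rewrites these as the exact expressions appearing in the statement of Conjecture~\ref{cj2}. (As a sanity check, when $p=2$ this gives $t_2 = -7/8$ and $l_2 = 9/4$, recovering Metzler's square for the classical hyperbolic Mandelbrot set.)

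The main obstacle is that the whole argument is conditional on Conjecture~\ref{cj1}; the hyperbolic geometry inherits its shape from the one-dimensional real cross-section, so the genuine work must happen on the real axis. The even-$p$ case of Conjecture~\ref{cj1} is substantially harder than the odd-$p$ case handled in Theorem~\ref{t2.3.1} because (i) the rotational symmetry supplied by Lemma~\ref{l2.3.1} with exponent $(p-1)/2$ is no longer an integer, so $c \mapsto -c$ is not available and the proof cannot be symmetrized to $c \geq 0$; and (ii) the left endpoint $-2^{1/(p-1)}$ is not governed by the fixed-point equation $R_{p,c}(x) = x^p - x + c = 0$ studied in Section~3.1, but rather by a super-attracting $2$-cycle of $Q_{p,c}$. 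Consequently any proof of Conjecture~\ref{cj1} will need an analysis of period-$2$ orbits and their escape behavior, which is the step where I expect the real difficulty to lie; once Conjecture~\ref{cj1} is established, the passage to Conjecture~\ref{cj2} is the elementary linear-algebra calculation sketched above.
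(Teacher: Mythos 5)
The statement you were asked to prove is labelled a \emph{conjecture} in the paper, and the paper offers no proof of it; it is stated in the conclusion precisely because the key ingredient is open. Your proposal does not close that gap: it is a (correct) \emph{reduction} of Conjecture~\ref{cj2} to Conjecture~\ref{cj1}, not a proof of Conjecture~\ref{cj2}. The reduction itself is sound and matches what the paper implicitly intends when it says Conjecture~\ref{cj1} ``would conduct us'' to Conjecture~\ref{cj2}: Lemma~\ref{lemHyperIterationRealCompo} and the splitting of $\oa \bHpc^m(\mathbf{0})\fa$ into the two real orbits $\oa Q_{p,x-y}^m(0)\fa$ and $\oa Q_{p,x+y}^m(0)\fa$ are indeed parity-independent, and your algebra checks out: with $A=2^{1/(p-1)}$ and $B=(p-1)p^{-p/(p-1)}$ one has $t_p=(B-A)/2$ and $l_p=A+B$, which simplify exactly to the displayed expressions (and give $t_2=-7/8$, $l_2=9/4$, consistent with Metzler). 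But the genuine mathematical content — that $\mMan^p\cap\mR=[-2^{1/(p-1)},\,(p-1)p^{-p/(p-1)}]$ for even $p$ — is exactly what is missing, and you correctly identify that everything hinges on it. As it stands, your argument establishes only the conditional implication Conjecture~\ref{cj1} $\Rightarrow$ Conjecture~\ref{cj2}.

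Two small corrections to your closing remarks about where the difficulty lies. First, your point (i) is right: for even $p$ the symmetry group of Lemma~\ref{l2.3.1} has odd order $p-1$, so $c\mapsto -c$ is unavailable and the two endpoints must be treated separately. But your point (ii) is inaccurate: the left endpoint $c=-2^{1/(p-1)}$ is not governed by a super-attracting $2$-cycle. At that parameter the critical orbit is $0\mapsto c\mapsto 2^{1/(p-1)}\mapsto 2^{1/(p-1)}\mapsto\cdots$, i.e.\ the critical point is \emph{pre-fixed}, landing after two steps on the repelling fixed point $\beta=2^{1/(p-1)}$, which is still a root of $R_{p,c}(x)=x^p-x+c$ (indeed $R_{p,c}(2^{1/(p-1)})=0$ there). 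So the fixed-point equation of Section~3.1 remains relevant at the left endpoint; what changes is that the orbit reaches the relevant root by landing on it rather than by monotone convergence, which is why the monotonicity arguments of Lemma~\ref{lemIncSeqPoly} and Theorem~\ref{t2.3.1} do not transfer directly. Any attack on Conjecture~\ref{cj1} should be organized around this eventual-fixed-point behavior rather than around period-$2$ orbits.
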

	
Further explorations of 3D slices of the tricomplex Multibrot sets are also planned.

\section*{Acknowledgement}
DR is grateful to the Natural Sciences and
Engineering Research Council of Canada (NSERC) for financial
support. POP would also like to thank the NSERC for the award of a Summer undergraduate Research grant.


\end{document}